
\tracingrestores=1 \errorcontextlines99

\documentclass[11pt,a4paper]{amsart}
\usepackage{graphicx}

 \setlength{\topmargin}{-1.5cm}
\setlength{\textwidth}{16.2cm}
\setlength{\textheight}{24cm}
\setlength{\oddsidemargin}{0cm}
\setlength{\evensidemargin}{0cm}
\DeclareGraphicsRule{.tif}{png}{.png}{`convert #1 `basename #1 ..tif`.png}

\usepackage{amsmath}%
\usepackage{amssymb,indentfirst,calc,mathrsfs,euscript,bbm}%

\usepackage{setspace}%
\usepackage[reals]{layout}%
\usepackage{xr}%
\usepackage{amscd}%
\usepackage{comment}
\usepackage{epstopdf}

\def\rit{\mathbb R}

\def\val{\hbox{\tt val}}

 \def\P{{\mathsf P}} 
 \def\E{{ \mathsf E}} 

\def\val{\hbox{\tt val}}

\def\wl{\hbox{$\lim_{{\lambda}{\rightarrow}0}
v_{\lambda}$}}

\def\D{{\bf D}}
\def\T{{\bf T}}

\def\wl{w_{\lambda}}
\def\Wl{W_{\lambda}}

\newtheorem{lem}{Lemma}[section]
\newtheorem{defi}{Definition}[section]

\newtheorem{rem}{Remark}[section]

\newtheorem{cor}{Corollary}[section]
\newtheorem{pro}{Proposition}[section]

\begin{document}

\title{Operator approach to values of  stochastic games with varying  stage duration}

\author{
Sylvain Sorin}
\address{
\textbf{Sylvain Sorin}\\
Sorbonne Universit\'es, UPMC Univ Paris 06, Institut de Math\'ematiques de Jussieu-Paris Rive Gauche, UMR 7586, CNRS,
 Univ Paris Diderot, Sorbonne Paris Cit\'e, F-75005, Paris, France }
\email{
sylvain.sorin@imj-prg.fr\newline
http://webusers.imj-prg.fr/~sylvain.sorin/
}
\thanks{This was co-funded by 
PGMO  2014-LMG. The second author was partially supported by the French Agence Nationale de la Recherche (ANR) "ANR GAGA: ANR-13-JS01-0004-01". }

\author{Guillaume Vigeral }
\address{\textbf{Guillaume Vigeral}\\
Universit\'e Paris-Dauphine, CEREMADE, Place du Mar{\'e}chal De Lattre de Tassigny. 75775 Paris cedex 16, France}
\email{
vigeral@ceremade.dauphine.fr\newline
http://www.ceremade.dauphine.fr/~vigeral/indexenglish.html
}
\date{ September   2015}

\keywords { stochastic games, stage duration, Shapley operator, non expansive map, evolution equation}

\bibliographystyle{apalike}

\begin{abstract}
We study the links between the values of  stochastic  games   with varying stage duration $h$,  the corresponding  Shapley operators $\T$ and $\T _h= h\T + (1-h ) Id$
and the solution of the evolution equation $\dot f_t = (\T - Id )f_t$. Considering general non expansive maps  we establish two kinds of results,  under both  the discounted or the finite length framework,  that apply to the class of ``exact" stochastic games. First, for a fixed length or discount factor, the  value converges as the stage duration go to 0. Second, the asymptotic behavior of the  value as the length goes to infinity, or as the discount factor goes to 0, does not depend on the stage duration. 
In addition, these properties imply the existence of the value of the finite length or discounted continuous time game  (associated to a continuous time jointly controlled Markov process), as the limit of the value of any  discretization with vanishing mesh.\end{abstract}

\maketitle

\section{Introduction}
The operator introduced by Shapley  \cite{Sh} to study zero-sum discounted stochastic games is a non expansive map $ \T$ from a Banach space to itself. Several results have been obtained by using a similar  ``operator approach"   in the framework of zero-sum repeated games,   \cite{RS},  \cite{ NNE}, \cite{S03}, \cite{NS10}.
In particular the analysis extends to general repeated games (including incomplete information and signals, see  \cite{MSZ} Chapter IV) and arbitrary evaluation of the sequence of stage payoffs. 

An important part of the literature studies families of evaluations with vanishing stage weight (either length going to infinity or discount factor going to 0) and the main issue is the existence of an asymptotic value. Assuming that the stage duration is one, each evaluation induces a time ponderation on $\rit^+$ and vanishing stage weight leads to an increasing number $n_a$ of interactions during any given fraction $a\in]0,1[$ of the game that has been played according to this ponderation.

We consider here another direction of research: the time ponderation is fixed and the stage duration vanishes (leading to a continuous time game at the limit). Note that, as above, this leads to an increasing number $n_a$ of interactions. 

We study in particular stochastic games with varying stage duration, in the spirit of   Neyman  \cite{N2013}. Our approach is based on the non expansive property of the Shapley operator to derive convergence results, characterization of the values, and links with evolution equations in continuous time.
\\

\noindent The structure of the paper is as follows:\\
We first recall the  definition of  the Shapley operator $T$ associated to a stochastic game (Section  2) and the related finite and discounted  iterations.\\
 We introduce in Section 3 two models of stochastic games with  variable stage duration $h$  : linearization via ``exact" games, and ``discretization" of a continuous time model. In both frameworks we describe  the link with the fractional Shapley operator $\T_h$.\\
 Sections 4 and 5 are  devoted to the abstract analysis of various fractional  iterations of a  general  non expansive map $\T$:\\
- first in the finite iteration case, where we establish relations between  the  $n$-iterate $ \T_h ^n$   and  the solution of the evolution equation $\dot f_t = (\T - Id )f_t$ at time $t = nh$,\\
- then in the discounted case,  where we identify the $\lambda$-discounted evaluation associated to $\T_h$ as the $ \mu = \frac {\lambda}{1+\lambda - \lambda h} $-discounted evaluation associated to $\T$.\\
We then  apply these results to the case of exact stochastic games. Section 6 and 7 are respectively devoted to the study of games with finite length and with  discount factor. In both frameworks we establish results of two different kinds. Firstly, for a fixed evaluation (finite length or discount factor), the value of a game with varying stage duration converges as the stage duration goes to 0. 
  Secondly, the asymptotic behavior of the  value (for large length or small discount factor) does not depend on the stage duration. \\
   In Section 8 we study the discretization of the continuous time game by approximating with exact games and we prove convergence of the values in the finite length and discounted case as the stage duration vanishes.\\
    The last section provide concluding comments.
   
\section{Stochastic games and Shapley operator }
Consider a  two person zero-sum  stochastic  game $G$ with a finite state space $\Omega$. $I$ and 
$J$  are compact  metric action spaces, $X$ and $Y$ are the sets of regular probabilities on the corresponding Borel $\sigma$-algebra. $g$  is  a bounded measurable payoff function from $ \Omega \times I \times J $ to $\rit $  (with multilinear extension to $X\times Y$)  and for each $(i,j) \in  I \times J, $  $P(i,j)$  is a   transition probability from $ \Omega $ to $\Delta (\Omega)$ (the set of probabilities on $\Omega$). $g$ and $P$ are separately continuous on $I$ and $J$.

The game is played in stages. At stage $n$, knowing the state $\omega_n$, player 1 (resp. 2) chooses $i_n \in I$ (resp. $j_n \in J$), the stage payoff is $g_n = g( \omega_n, i_n, j_n)$. The next state $ \omega_{n+1}$  is selected according to the probability $ P(i_n, j_n)[\omega_n]$ and is announced  to the players.\\

One associates  to $G$ a  Shapley operator, see Shapley   \cite{Sh}, which  is a map $\T$  from $F = \rit ^\Omega$  to itself: $ f \in F \mapsto \T (f)$  defined  by 
\begin{equation}\label{gT}
\T (f) (\omega) = \underset{(x,y) \in X\times Y}{\val} \{ g(\omega; x, y)  + P( x, y) [\omega] \circ  f \}, \qquad \forall \omega \in \Omega
\end{equation}
where $ \underset{X\times Y}{\val} $ is the $\max\min= \min\max = $ value operator on $X\times Y$, \\
$P( x, y) [\omega] (\omega ') = \int_{I\times J}  P( i, j) [\omega] (\omega ') x(di) y(dj) $ and  for $R \in \rit^\Omega$, $R\circ f = \sum_{\zeta \in \Omega} R(\zeta) f(\zeta)$.\\
Note that $\T$ is a non expansive map. Moreover $\T$ is monotone and translates the constants (for a converse result see, e.g., Kolokoltsov \cite{K92},  and Sorin \cite{S04} for related consequences) but we will not use here  these additional properties. \\

One can consider two other frameworks with $\Omega$ standard Borel,    where 
$\T$ is defined in a similar way  with $P( x, y) [\omega] \circ  f = \int _{\Omega} f(\zeta)  P( x, y) [\omega] (d\zeta)$  and  where $F$ is  either:\\
-  the set of  bounded measurable  functions on $\Omega$  and $P(i,j) [\omega] (A) $ is separately continous in  $(i,j)$ for each  Borel subset $A \subset \Omega$ (see \cite{MSZ},  Prop. VII.1.4),\\
- or  the set  of bounded  continuous functions on $\Omega$ and both maps $(x, \omega) \mapsto  I (x,y; \omega) =   \int _{\Omega} f(\zeta)  P( x, y) [\omega] (d\zeta)$ and  $(y, \omega) \mapsto  I (x,y; \omega)$ are continuous  for any bounded continuous function $f$ on $\Omega$  (see \cite{MSZ},  Prop. VII.1.5).\\
For more general conditions see Nowak  \cite{No85}, \cite{No03}.

From now on  we  assume that one of these cases  holds so that $\T$ is well defined from some Banach space  $F$ to itself.\\

Recall that  $V_n = \T ^n (0)$ is the value of the  $n$-stage game with total evaluation  $\sum_{m = 1}^n  g_m$ (as a function of the initial state) so that the normalized value is $v_n = \frac{V_n}{n}$.\\ 
$W_{\lambda} $,  which is the unique fixed point of $f \mapsto \T ((1- \lambda) f)$ on $F$,   is the un-normalized value of the discounted game with total evaluation  $\sum_{m = 1}^{\infty}  g_m (1- \lambda)^{m-1}$ and the normalized discounted value is $w_\lambda= \lambda W_{\lambda} $.


\section{Stochastic games with varying stage duration}

Let  us introduce, for each $(i,j)\in I\times J$, the kernel $Q(i,j)$ such that  $P( i, j) = Id + Q( i,j)$
and  write  $G= (g, Q)$ for  a stochastic game  defined as above.
One introduces two  families  of varying  stage duration games, see Neyman  \cite{N2013}, associated to $G$.

 \subsection{Exact sequence}$ $ \\
 Consider $G$ as a game with stage duration one. Given a step size $h \in (0, 1]$,
 define an  ``exact"  game  $ G^h$ with  stage duration $h$, stage payoff $hg$ and stage  transition $P_h = Id + h\: Q$. That is, $G^h = (h\:g, h\:Q)$.\\
 $G^h$ appears as a linearization of the game $G$. During a stage of duration $h$ both the payoff and the state variation  are proportional with factor $h$ to those of a stage of duration one.
 \begin{defi}
Given $ h \in [0,1]$, let 
$
 \T_{h} =  (1-h)  Id + h\: \T.
 $
\end{defi}

 Then one has:
 \begin{pro}$ $\\
If $\T$ is the Shapley operator of $G$, then $\T_h$ is the Shapley operator of the game $G^h$.
\end{pro}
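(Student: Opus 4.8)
The plan is to compute the Shapley operator of the exact game $G^h$ directly from definition~\eqref{gT} and simplify it until it matches $\T_h$. First I would write out, for $G^h = (h\,g, h\,Q)$ with transition $P_h = Id + h\,Q$, its Shapley operator applied to $f \in F$ at a state $\omega$:
\begin{equation*}
\underset{(x,y)\in X\times Y}{\val}\{\, h\,g(\omega;x,y) + P_h(x,y)[\omega]\circ f \,\}.
\end{equation*}
The key algebraic step is to rewrite $P_h$ in terms of the original transition $P = Id + Q$. Since $Q = P - Id$, one has $P_h = (1-h)\,Id + h\,P$, and therefore, using that the identity kernel assigns to $\omega$ the Dirac mass at $\omega$ so that $Id[\omega]\circ f = f(\omega)$,
\begin{equation*}
P_h(x,y)[\omega]\circ f = (1-h)\,f(\omega) + h\,P(x,y)[\omega]\circ f.
\end{equation*}

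Substituting this into the expression above, I would then observe that the summand $(1-h)\,f(\omega)$ does not depend on $(x,y)$, while the remaining terms share a common factor $h$. Invoking the two standard properties of the value operator — that it translates constants and is positively homogeneous, i.e. $\val(c + M) = c + \val(M)$ and $\val(\alpha M) = \alpha\,\val(M)$ for $\alpha \ge 0$ — I pull $(1-h)\,f(\omega)$ outside the $\val$ and factor out $h$, obtaining
\begin{equation*}
(1-h)\,f(\omega) + h\,\underset{(x,y)\in X\times Y}{\val}\{\, g(\omega;x,y) + P(x,y)[\omega]\circ f \,\} = (1-h)\,f(\omega) + h\,\T(f)(\omega) = \T_h(f)(\omega).
\end{equation*}

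The only point requiring care is this last manipulation of $\val$, and it is immediate from the two displayed identities, which hold for the max-min on $X\times Y$ (and are precisely the operator-level facts that $\T$ translates constants and is positively homogeneous, noted after~\eqref{gT}). I expect no substantive obstacle here: the statement is essentially the observation that scaling the payoff and the transition by the same factor $h$ corresponds exactly, at the level of Shapley operators, to forming the convex combination $(1-h)\,Id + h\,\T$.
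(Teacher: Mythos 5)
Your proof is correct and is essentially the paper's own argument read in the opposite direction: the paper starts from $\T_h(f) = (1-h)f + h\,\val\{g + P\circ f\}$ and uses positive homogeneity and translation of constants of $\val$ to absorb everything into $\val\{h\,g + P_h\circ f\}$, while you start from $\val\{h\,g + P_h\circ f\}$ and use the same two properties to extract $(1-h)f + h\,\T(f)$. The computation and the key facts about the value operator are identical, so this counts as the same approach.
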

\begin{proof}
Since  $  \T_{h} (f) = (1-h) f + h \: \val \{  g +  P \circ  f \}= (1-h) f + \val \{ h\: g +   h (Id +  Q)  \circ  f \}$, one obtains 
\begin{equation}\label{gTmu}
 \T_{h} (f) = \val \{ h\: g +  P_{h}\circ  f \}
\end{equation}
 with
 $P_{h} = Id + h Q$. \\
Hence $  \T_{h} $ is the one stage operator associated to the  game $G^h$,.
\end{proof}
We will consider the associated finitely repeated games and discounted games asociated to $G^h$.
 Natural questions are, in the finite case : \\
1) given a total length $M$, what is the asymptotic behavior of the value of the $N$-stage game with stage duration $h$, as $h$ vanishes and $Nh= M$. \\
2) what is the asymptotic behavior  of the value, as $Nh$ goes to $\infty$,\\
and similarly in the discounted framework.\\

These topics will be addressed in the  general setting of a non expansive map $\T$  in Sections 4 and 5.
In both cases we will obtain explicit formulations for the limits.

 \subsection{Discretization}$ $ \\
 Let  $G= (g, Q)$ be a stochastic game with a {\it finite} state space. 
We consider here a continuous time jointly controlled Markov  process associated to  the kernel $Q$.\\
Explicitly, define $\P ^t (i, j)$ as the continuous time homogeneous Markov chain  on $\Omega$, indexed by $\rit ^+$, with generator  $Q(i, j)$:
\begin{equation}\label{evol}
 \dot \P ^t(i,j)  = \P ^t(i,j) Q(i,j).
\end{equation}
Given a stepsize $h \in (0, 1]$,
 $\overline G ^h$ has to be considered as the discretization with mesh $h$ of the game in continuous time  $\overline G$ where the state variable follows  $\P^t$ and is  controlled by  both players, see  \cite{Za64}, \cite{TW}, \cite{GH1},  \cite{N2012}.\\
More precisely  the players act at  time $s= kh$ by choosing   actions $(i_s, j_s)$  (at random according to  some ${x_s}$, resp. ${y_s}$), knowing the current    state.
 Between time $s$ and $s+h$,  the state  $\omega_t$ evolves  with  conditional law $\P^t$ following (\ref{evol}) with $Q(i_s, j_s)$  and   $\P^s = Id$.

 The associated Shapley operator  of this stochastic game is $ \overline \T _h $ with
$$
 \overline \T _h (f)    = \underset{X\times Y}{\val} \{  g^h +  \P^h \circ    f \}
 $$
 where $g^h (\omega_0, x,y)$ stands for $\E [ \int _0^h g( \omega_t;  x, y )dt]$ and $\P^h ( x, y) = \int_{I\times J} \P^h (i, j) x(di) y(dj)$.

The corresponding finitely repeated and discounted games will be analyzed in Section 8.

\section{Finite iterations of non expansive maps and evolution equations}

Consider a non expansive map $\T$ from a Banach space $Z$ to itself. In this section we  recall basic results concerning its iterations and the corresponding discrete and continuous dynamics.
\subsection{Finite iteration}$ $\\
The   $n$-stage iteration starting from $z\in Z $ is   $U_n = {\T}^n (z)$ hence satisfies 
$$
U_n - U_{n-1} =  - (Id - {\T})(U_{n-1})
$$
 which can be considered as a discretization of the differential equation
\begin{equation}\label{M}
\dot f_t = -(Id - \T) f_t,\quad f_0 = z.
\end{equation}
(Note that this is a special case of the differential inclusion $  \dot f_t  \in  - A f_t$, for the accretive (maximal monotone) operator  $A = Id - {\T}$.)\\

The comparison between the iterates of $\T$ and the solution  $f_t(z)$ of the differential equation (\ref{M})  is given by the generalized Chernoff's formula \cite{MO}, \cite{BP},  see, e.g., Br\'ezis  \cite{Brezis}, p.16:
\begin{pro}
\begin{equation}\label{T}
\| f_t(z)  - {\T}^n (z))\| \leq \| z - \T(z) \| \sqrt { t + (n-t)^2}.
\end{equation}
\end{pro}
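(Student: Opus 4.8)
The plan is to decouple the two indices by reducing everything to a scalar differential recursion in $t$ and then comparing with an explicit supersolution. Set $\delta := \|z-\T(z)\|$ and $\Delta_n(t) := \|f_t(z) - \T^n(z)\|$, where $f_t(z)$ is the (globally defined, since $-(Id-\T)$ is $1$-Lipschitz) solution of (\ref{M}). The first task is to collect a priori bounds. Non expansiveness of $\T$ gives $\|\T^k(z)-\T^{k-1}(z)\|\le\delta$ for all $k$, hence $\Delta_n(0)=\|z-\T^n(z)\|\le n\,\delta$. For the flow, I would use that $z\mapsto f_t(z)$ is itself non expansive and that the family is a semigroup, so $\|f_{t+s}(z)-f_t(z)\|=\|f_t(f_s(z))-f_t(z)\|\le\|f_s(z)-z\|$; dividing by $s$ and letting $s\to0^+$ shows $t\mapsto\|\dot f_t(z)\|$ is non increasing, whence $\|\dot f_t(z)\|\le\|\dot f_0(z)\|=\|\T(z)-z\|=\delta$ and therefore $\Delta_0(t)=\|f_t(z)-z\|\le\delta t$.

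The heart of the argument is a differential inequality linking $\Delta_n$ to $\Delta_{n-1}$. For fixed $n\ge1$ I would write $\T^n(z)=\T(\T^{n-1}(z))$ and differentiate $\Delta_n$ from the right. Since we are in a general Banach space, the Hilbertian computation of $\tfrac12\frac{d}{dt}\|\cdot\|^2$ is replaced by the one-sided derivative of the norm, $[x,y]_+:=\lim_{h\to0^+}h^{-1}(\|x+hy\|-\|x\|)$, which is subadditive in $y$, satisfies $[x,y+\mu x]_+=[x,y]_++\mu\|x\|$ for $\mu\in\rit$, and obeys $[x,y]_+\le\|y\|$. Using $\dot f_t(z)=\T(f_t(z))-f_t(z)$ and the decomposition $\T(f_t(z))-f_t(z)=\big(\T(f_t(z))-\T(\T^{n-1}(z))\big)-\big(f_t(z)-\T^n(z)\big)$, these rules give
\[
\tfrac{d^+}{dt}\Delta_n(t)=\big[\,f_t(z)-\T^n(z),\ \dot f_t(z)\,\big]_+\le\|f_t(z)-\T^{n-1}(z)\|-\Delta_n(t),
\]
where non expansiveness of $\T$ bounds the first term by $\|f_t(z)-\T^{n-1}(z)\|=\Delta_{n-1}(t)$. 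This yields the recursion $\tfrac{d^+}{dt}\Delta_n+\Delta_n\le\Delta_{n-1}$.

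It then remains to solve this scalar system, for which I claim $\beta_n(t):=\delta\sqrt{t+(n-t)^2}$ is a supersolution. Indeed $\beta_n(0)=n\delta\ge\Delta_n(0)$ and $\beta_0(t)=\delta\sqrt{t+t^2}\ge\delta t\ge\Delta_0(t)$, so the data dominate. Writing $s=n-t$, a direct computation gives $2\beta_n(\dot\beta_n+\beta_n)=\delta^2(1-2s+2t+2s^2)$, which equals $\beta_n^2+\beta_{n-1}^2$; hence the required inequality $\dot\beta_n+\beta_n\ge\beta_{n-1}$ is, after multiplying by $2\beta_n>0$, exactly $\beta_n^2+\beta_{n-1}^2\ge2\beta_n\beta_{n-1}$, i.e.\ $(\beta_n-\beta_{n-1})^2\ge0$. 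An induction on $n$, with the elementary Gronwall comparison (from $\tfrac{d^+}{dt}(\Delta_n-\beta_n)+(\Delta_n-\beta_n)\le\Delta_{n-1}-\beta_{n-1}\le0$ and $\Delta_n(0)\le\beta_n(0)$ one gets that $e^{t}(\Delta_n-\beta_n)$ is non increasing, hence $\Delta_n\le\beta_n$), then closes the proof and delivers $\Delta_n(t)\le\delta\sqrt{t+(n-t)^2}$.

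I expect the main obstacle to be the middle step in the Banach-space setting: the clean differential inequality is transparent in a Hilbert space via $\tfrac12\frac{d}{dt}\|\cdot\|^2$, but in general one must justify differentiating the norm and invoke the calculus of the bracket $[\cdot,\cdot]_+$, together with the upstream facts that the flow is a non expansive semigroup with $\|\dot f_t(z)\|$ non increasing. Once that inequality is in hand, the reduction to $(\beta_n-\beta_{n-1})^2\ge0$ is purely elementary, and the exact form $\sqrt{t+(n-t)^2}$ emerges precisely because the two initial/boundary profiles $\beta_n(0)=n\delta$ and $\beta_0(t)=\delta\sqrt{t+t^2}$ match the supersolution of the recursion.
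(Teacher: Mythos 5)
Your argument is correct, but note that the paper itself gives no proof of this proposition: inequality \eqref{T} is quoted as the generalized Chernoff formula from \cite{MO}, \cite{BP} and Br\'ezis \cite{Brezis}, p.~16, and it is also the uniform-step case $h_i\equiv 1$ of Proposition \ref{kobavig} (inequality \eqref{v2}), which the paper imports from Vigeral \cite{V1}. Your proof shares its first half with those classical arguments: they too derive the recursion $\frac{d^+}{dt}\Delta_n+\Delta_n\le \Delta_{n-1}$, in Hilbert space via $\frac12\frac{d}{dt}\|\cdot\|^2$ and in Banach space via the bracket $[\cdot,\cdot]_+$ exactly as you do (the Banach version is the one needed here, since the relevant space is $\rit^\Omega$ with the sup norm). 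The genuine difference is in how the recursion is solved: the classical route integrates by variation of constants, $\Delta_n(t)\le e^{-t}\Delta_n(0)+\int_0^t e^{-(t-s)}\Delta_{n-1}(s)\,ds$, unrolls the induction into the Poisson-weighted sum $\delta e^{-t}\sum_k\frac{t^k}{k!}\,|n-k|$, and concludes by Cauchy--Schwarz, the mean and variance of the Poisson law producing exactly $t+(n-t)^2$; you instead verify directly that $\beta_n(t)=\delta\sqrt{t+(n-t)^2}$ is a supersolution, via the identity $2\beta_n(\dot\beta_n+\beta_n)=\beta_n^2+\beta_{n-1}^2$, so that the closure reduces to $(\beta_n-\beta_{n-1})^2\ge 0$. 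Your version is slicker and explains structurally why the expression $t+(n-t)^2$ appears, whereas the classical unrolling yields the sharper intermediate Poisson bound. Two minor points to tidy: dividing by $2\beta_n$ requires $\beta_n>0$, so dispose of the trivial case $\delta=0$ first (for $\delta>0$ and $n\ge 1$ one has $\beta_n>0$ on all of $[0,\infty)$); and the prerequisite $\Delta_0(t)\le \delta t$ does not actually need the semigroup facts you invoke, since the same bracket computation, $\frac{d^+}{dt}\Delta_0=\bigl[f_t-z,(\T f_t-\T z)+(\T z-z)-(f_t-z)\bigr]_+\le \delta$, gives it in one line and keeps the proof entirely self-contained.
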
$ $ \\
In particular with $z = 0$ and  $t=n$, one obtains
\begin{equation}\label{Vn}
\| \frac{f_n(0)}{n}  -  v_n \| \leq \frac{\| {\T} (0) \| } {\sqrt { n }}
\end{equation}
where as before, $\T ^n (0) =V_n=  n v _n $.\\
Given  $h \in (0,1]$, a   change of time shows that   $ f _{t/h}(z)$ is the  solution of
\begin{equation}\label{M'}
\dot g_t =- \frac{ (Id - {\T}) g_t} {h},\quad g_0 = z.
\end{equation}

\subsection{Interpolation}$ $\\
Given  $h \in [0,1]$ introduce again:
\begin{equation}\label{TMu}
\T _{h} = (1-h)  Id + h\: \T .
\end{equation}
 Then using (\ref{M'})  which is 
$$
\dot g_t =- { (Id - {\T}_h) g_t} ,\quad g_0 = z,
$$
one obtains from \eqref{T}
\begin{equation}\label{Tmu}
\| f_t(z)  - \T_{h}^n (z)\| \leq \| z - \T z\| \sqrt { {t}{h} + (n h - {t})^2},
\end{equation}
hence in particular  with $h = \frac{t}{n}$
\begin{equation}\label{itert}
\| f_t(z)  -  \T_{t/n}^n (z)\| \leq \| z - \T z\| \frac{t}{ \sqrt n}  \, ,
\end{equation}
or
\begin{equation}\label{itermu}
\| f_{n h} (z)  -  \T_{h}^n (z)\| \leq \| z - \T z\|\,  h \sqrt{n}.
\end{equation}


\subsection{Eulerian schemes} $ $ \\
More generally for a  sequence  of step sizes 
$ \{h_{k}\}$ in $[0,1]$ one defines inductively an Eulerian scheme $\{z_k\}$ by
$$
z_{k+1}-z_k  = h_{k+1}(T-Id)(z_k) 
$$
or 
$$
z_{k+1}=\T_{h_{k+1}} z_{k}.
$$
For two  sequences  
$\{ h_{k}\}, \{ \hat h_{\ell}\}$ in $[0,1]$, with  associated Eulerian schemes
$$
z_{k+1}  = \T_{h_{k+1}} z_{k},
$$
$$
\hat z_{\ell+1}  = \T_{\hat h_{\ell+1}}  \hat z_{\ell},
$$
Vigeral \cite{V1} obtains 

\begin{pro}\label{kobavig}
\begin{equation}\label{v1}
\| \hat z_{\ell}   - z_k  \| \leq \| \hat z_0 - z \| + \| z_0 - z \| +  \| z - \T z\|\,   \sqrt{ (\sigma_k - \hat \sigma_{\ell} ) ^2 + \tau_k + \hat  \tau_{\ell}},\quad  \forall z\in z,
\end{equation}
\begin{equation}\label{v2}
\| f_{t} (z)  - z_k  \| \leq \| z - \T z\|\,   \sqrt{ (\sigma_k - t ) ^2 + \tau_k },
\end{equation}
with $z_0 = z$, $\sigma_k = \sum_{i = 1}^k h _ i$ , $ \tau _k = \sum_{i = 1}^k h _ i ^2$, $\hat \sigma_\ell = \sum_{j = 1}^\ell \hat h _ j$ , $\hat \tau _\ell = \sum_{j = 1}^\ell \hat h _ j ^2$.
\end{pro}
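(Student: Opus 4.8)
The plan is to prove the two-scheme bound \eqref{v1} by a Kobayashi-type double induction on the index pair $(k,\ell)$, and then to recover the scheme-versus-flow bound \eqref{v2} by specializing the second scheme to a vanishing-mesh discretization of the continuous semigroup. Throughout I write $a_{k,\ell}=\|z_k-\hat z_\ell\|$, $c=\|z-\T z\|$, $w_k=\T z_k-z_k$, $\hat w_\ell=\T\hat z_\ell-\hat z_\ell$, and introduce the majorant $\Phi_{k,\ell}=\sqrt{(\sigma_k-\hat\sigma_\ell)^2+\tau_k+\hat\tau_\ell}$. The target of the induction is
\[
a_{k,\ell}\le \|z_0-z\|+\|\hat z_0-z\|+c\,\Phi_{k,\ell}.
\]

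The first ingredient is a \emph{defect lemma}. From $z_{k+1}=z_k+h_{k+1}w_k$ and nonexpansiveness of $\T$ one gets
\[
\T z_{k+1}-z_{k+1}=(\T z_{k+1}-\T z_k)+(1-h_{k+1})w_k,
\]
so that $\|w_{k+1}\|\le h_{k+1}\|w_k\|+(1-h_{k+1})\|w_k\|=\|w_k\|$ since $h_{k+1}\in[0,1]$. Hence $\|\T z_k-z_k\|$ is nonincreasing, $\|z_{k+1}-z_k\|=h_{k+1}\|w_k\|\le h_{k+1}c$ (using $z_0=z$ so $\|w_0\|=c$; for the $\hat z$-scheme the triangle inequality adds only $2\|\hat z_0-z\|$). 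Telescoping then settles the axes: e.g. $a_{k,0}\le\|\hat z_0-z\|+c\,\sigma_k$, which is dominated since $\Phi_{k,0}=\sqrt{\sigma_k^2+\tau_k}\ge\sigma_k$.

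For the inductive step I would use the exact one-step identity, valid for every $\lambda,\mu,\nu\ge 0$ with $\lambda+\mu+\nu=1$,
\[
z_k-\hat z_\ell=\lambda(z_{k-1}-\hat z_\ell)+\mu(z_k-\hat z_{\ell-1})+\nu(z_{k-1}-\hat z_{\ell-1})+(1-\mu)h_k\,w_{k-1}-(1-\lambda)\hat h_\ell\,\hat w_{\ell-1}.
\]
The crucial choice is the \emph{mean-matching} condition $(1-\mu)h_k=(1-\lambda)\hat h_\ell=:\rho$. It simultaneously (i) equalizes the coefficients of the two defect vectors, collapsing the remainder to the single term $\rho\,(w_{k-1}-\hat w_{\ell-1})$, and (ii) annihilates the first-order term in the square of the majorant, giving exactly $\lambda\Phi_{k-1,\ell}^2+\mu\Phi_{k,\ell-1}^2+\nu\Phi_{k-1,\ell-1}^2=\Phi_{k,\ell}^2-2\nu h_k\hat h_\ell$. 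Applying the induction hypothesis to the three neighbours (the initial terms pass through since the weights sum to $1$) and using concavity of $\sqrt{\,\cdot\,}$ to recombine
\[
\lambda\Phi_{k-1,\ell}+\mu\Phi_{k,\ell-1}+\nu\Phi_{k-1,\ell-1}\le\sqrt{\Phi_{k,\ell}^2-2\nu h_k\hat h_\ell}\le\Phi_{k,\ell}
\]
then produces the desired bound \emph{up to} the remainder $\rho\|w_{k-1}-\hat w_{\ell-1}\|$.

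The main obstacle is precisely closing this last gap: one must show the remainder $\rho\|w_{k-1}-\hat w_{\ell-1}\|$ is absorbed by the quadratic slack $c\big(\Phi_{k,\ell}-\sqrt{\Phi_{k,\ell}^2-2\nu h_k\hat h_\ell}\big)\approx c\,\nu h_k\hat h_\ell/\Phi_{k,\ell}$. Bounding $\|w_{k-1}-\hat w_{\ell-1}\|\le 2c$ naively is too lossy (it would accumulate linearly in $\sigma_k+\hat\sigma_\ell$), so the delicate point is to exploit the cancellation $w_{k-1}-\hat w_{\ell-1}=(\T z_{k-1}-\T\hat z_{\ell-1})-(z_{k-1}-\hat z_{\ell-1})$ together with the cumulative concavity slack of the square root, balancing the drift-cancellation requirement (i) against this absorption; this is the technical heart of the estimate and dictates the precise form of the majorant. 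Once \eqref{v1} is established, \eqref{v2} follows by taking $z_0=\hat z_0=z$ and letting the mesh of the second scheme vanish with $\hat\sigma_\ell\to t$: then $\hat\tau_\ell\le(\max_j\hat h_j)\hat\sigma_\ell\to 0$ and, the Euler schemes being Cauchy by \eqref{v1}, $\hat z_\ell$ converges to the solution $f_t(z)$ of \eqref{M}, yielding $\|f_t(z)-z_k\|\le c\,\sqrt{(\sigma_k-t)^2+\tau_k}$.
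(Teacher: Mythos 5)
Your overall architecture (double induction on $(k,\ell)$ against the majorant $\Phi_{k,\ell}$, axes handled by a defect lemma, then \eqref{v2} deduced from \eqref{v1} by letting the mesh of the second scheme vanish) is the right one: it is the Kobayashi-type argument which the paper itself does not reproduce but delegates to Vigeral \cite{V1}. However, your inductive step is not closed, and you say so yourself: the remainder $\rho\|w_{k-1}-\hat w_{\ell-1}\|$ is left to be ``absorbed'' by the concavity slack $c\bigl(\Phi_{k,\ell}-\sqrt{\Phi_{k,\ell}^2-2\nu h_k\hat h_\ell}\bigr)$, and this cannot work: the slack is of order $c\,\nu h_k\hat h_\ell/\Phi_{k,\ell}$, quadratic in the step sizes and decaying like $1/\Phi_{k,\ell}$, whereas the remainder is of order $\rho$ times a quantity with no reason to be small ($\|w_{k-1}-\hat w_{\ell-1}\|$ is only bounded by $2c+2\|\hat z_0-z\|$ a priori, or by $2a_{k-1,\ell-1}\approx 2c\,\Phi_{k-1,\ell-1}$ via the induction hypothesis, which grows rather than shrinks). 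No admissible choice of $\rho$ repairs this, so as written the proof of \eqref{v1}, hence also of \eqref{v2}, is incomplete at its central step.

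The missing idea is that the defect difference must not be split off at all. Assume without loss of generality $h_k\le\hat h_\ell$ and use the exact algebraic identity
\begin{equation*}
\T_{h_k}=\Bigl(1-\tfrac{h_k}{\hat h_\ell}\Bigr)Id+\tfrac{h_k}{\hat h_\ell}\,\T_{\hat h_\ell},
\end{equation*}
valid because both sides equal $(1-h_k)Id+h_k\T$. Then
\begin{equation*}
z_k-\hat z_\ell=\Bigl(1-\tfrac{h_k}{\hat h_\ell}\Bigr)(z_{k-1}-\hat z_\ell)+\tfrac{h_k}{\hat h_\ell}\bigl(\T_{\hat h_\ell}z_{k-1}-\T_{\hat h_\ell}\hat z_{\ell-1}\bigr),
\end{equation*}
and non expansiveness of $\T_{\hat h_\ell}$ yields the clean recursion $a_{k,\ell}\le(1-\alpha)a_{k-1,\ell}+\alpha\,a_{k-1,\ell-1}$ with $\alpha=h_k/\hat h_\ell$ and \emph{no} remainder; note that your own decomposition with $\mu=0$, $\rho=h_k$ is exactly this one, except that you bound the two pieces $\nu(z_{k-1}-\hat z_{\ell-1})$ and $\rho(w_{k-1}-\hat w_{\ell-1})$ separately instead of recognizing their sum as $\alpha\bigl(\T_{\hat h_\ell}z_{k-1}-\T_{\hat h_\ell}\hat z_{\ell-1}\bigr)$. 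The induction then closes by concavity of the square root together with $(1-\alpha)\Phi_{k-1,\ell}^2+\alpha\Phi_{k-1,\ell-1}^2=\Phi_{k,\ell}^2-2h_k^2\le\Phi_{k,\ell}^2$ (the symmetric identity is used when $\hat h_\ell<h_k$). A secondary repair: your treatment of the axis $a_{0,\ell}$ is too quick, since the hat-scheme defects are only bounded by $c+2\|\hat z_0-z\|$, and telescoping then produces an error $2\|\hat z_0-z\|\hat\sigma_\ell$ rather than the constant $\|\hat z_0-z\|$. The standard fix is to first compare each scheme, by non expansiveness of the maps $\T_h$, with the scheme having the same steps but started at $z$ --- this is precisely where the two additive terms $\|z_0-z\|+\|\hat z_0-z\|$ in \eqref{v1} come from --- and then to run the whole induction for schemes with common initial point $z$, for which all defects are bounded by $c$ and the initial terms disappear.
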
$ $ \\
In particular this gives, in the uniform case $h_i = h, \forall i$
\begin{equation}
\| f_t(z)  - \T_{h}^n (z))\| \leq \| z - \T z\| \sqrt { n h ^2+ (n h - {t})^2}
\end{equation}
 and coincides with (\ref{itert}) and (\ref{itermu}) at $t = n h$.

\subsection{Two approximations}$ $ \\
Equation (\ref{Tmu}) or more generally  (\ref{v2}) corresponds to two  approximations:

i) Comparison on a compact interval  $[0,M]$ of $f_t$   to   the linear interpolation $ \hat  \T_{M/n}^{s}$ of $\{\T_{M/n}^m\},  m= 0, ..., n $,  which is,   using (\ref{itert}) :
$$
\| f_t(0)  - \hat  \T_{M/n}^{nt/M} (0)\| \leq  K \frac{M}{ \sqrt n}, \ \forall t \in [0, M ],
$$
 for some constant $K$. \\
 Or more generally if one considers a sequence of step sizes  $\{h_i\}, i= 1, ..., k$ with $\sigma_k = \sum_{i=1}^k  h_i = M$, $h^i \leq h, \forall i$ and $ \Pi _i\T_{h_i} = \T_{h_1}\circ \cdots \circ \T_{h_k}$, one has:
\begin{equation}\label{various1}
\| f_M(0)  - \Pi _i\T_{h_i} (0)\| \leq  K  \sqrt {h M}.
\end{equation}$ $\\
Thus the composite iteration $ \Pi _i\T_{h_i}$ converges to the solution of (\ref{M}) as the mesh $h$ goes to 0.

ii) Asymptotic comparison of the  behavior of $f_t$, solution of (\ref{M}) and iterations  of the form  $\Pi _i\T_{h_i}$ with step size $h_i \leq 1$ and  total length $ \sigma_k = t$:
\begin{equation}\label{various2}
\| f_t (0)  -\Pi _i\T_{h_i} (0) \|\leq K \sqrt t.
\end{equation}

\section{Discounted iterations of non expansive maps}
\subsection{General properties}$ $ \\
For $\lambda \in (0,1]$ denote again  by  $ \Wl$  the unique fixed point  of $z \mapsto T ((1- \lambda) z)$ and  let $\wl= \lambda \Wl$.\\
We recall some basic evaluations, see e.g. \cite{V0}.

\begin{pro}\label{basiclem}
\begin{eqnarray*}
\|\wl \|&\leq& \| \T (0)\|,\\
\| \wl - w_\mu \| &\leq &2| 1 - \frac{\lambda}{\mu} |  \|\T (0)\|.
\end{eqnarray*}
\end{pro}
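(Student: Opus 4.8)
The plan is to prove the two inequalities of Proposition \ref{basiclem} separately, both resting on the defining fixed-point equation $\Wl = \T((1-\lambda)\Wl)$ together with the non-expansiveness of $\T$.

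For the first bound, I would start from the fixed-point characterization $\wl = \lambda \Wl$, so that $\Wl = \T((1-\lambda)\Wl)$. Applying the triangle inequality through the origin gives $\|\Wl\| = \|\T((1-\lambda)\Wl)\| \leq \|\T((1-\lambda)\Wl) - \T(0)\| + \|\T(0)\|$. Since $\T$ is non-expansive, the first term is at most $\|(1-\lambda)\Wl\| = (1-\lambda)\|\Wl\|$. Rearranging $\|\Wl\| \leq (1-\lambda)\|\Wl\| + \|\T(0)\|$ yields $\lambda \|\Wl\| \leq \|\T(0)\|$, and multiplying by $\lambda$ inside the norm (i.e. using $\wl = \lambda\Wl$) gives exactly $\|\wl\| \leq \|\T(0)\|$.

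For the second bound, I would compare the two fixed points $\Wl$ and $W_\mu$ directly. Writing $\Wl = \T((1-\lambda)\Wl)$ and $W_\mu = \T((1-\mu)W_\mu)$, non-expansiveness gives $\|\Wl - W_\mu\| \leq \|(1-\lambda)\Wl - (1-\mu)W_\mu\|$. The idea is then to pass from a comparison of $\Wl, W_\mu$ to a comparison of the normalized $\wl, w_\mu$. I would substitute $\Wl = \wl/\lambda$ and $W_\mu = w_\mu/\mu$, splitting the right-hand side into a term controlling $\|\wl - w_\mu\|$ and a remainder term measuring the discrepancy between the coefficients $(1-\lambda)/\lambda$ and $(1-\mu)/\mu$; the latter, after algebraic simplification, produces the factor $|1-\frac{\lambda}{\mu}|$ multiplying a quantity bounded via the first inequality by $\|\T(0)\|$. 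The constant $2$ should emerge naturally from collecting these two contributions.

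The main obstacle will be the bookkeeping in the second inequality: rearranging the contraction estimate so that the coefficient of $\|\wl - w_\mu\|$ on the right is strictly less than one (allowing one to absorb it to the left) while the residual term factors cleanly as $2|1-\frac{\lambda}{\mu}|\|\T(0)\|$. I expect one must symmetrize the roles of $\lambda$ and $\mu$ or add and subtract a suitable intermediate expression to achieve the clean constant; a naive estimate tends to yield a messier coefficient, so the key technical point is choosing the decomposition that makes the first bound $\|\wl\| \leq \|\T(0)\|$ directly applicable to the residual.
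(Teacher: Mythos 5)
Your first inequality is proved correctly, and by exactly the paper's argument: reverse triangle inequality through $\T(0)$, non expansiveness, and absorption of $(1-\lambda)\|\Wl\|$ to the left-hand side.

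The second inequality, however, has a genuine gap: the decomposition you outline does not produce the stated constant, and you never identify one that does. If you substitute $\Wl=\wl/\lambda$ and $W_\mu=w_\mu/\mu$ into $\|\Wl-W_\mu\|\leq\|(1-\lambda)\Wl-(1-\mu)W_\mu\|$ and split off the term in $\wl-w_\mu$ on each side, the remainder on both sides carries the coefficient discrepancy
\[
\frac{1-\lambda}{\lambda}-\frac{1-\mu}{\mu}=\frac{1}{\lambda}-\frac{1}{\mu}=\frac{\mu-\lambda}{\lambda\mu},
\]
not $|1-\frac{\lambda}{\mu}|$ as you assert. Moreover the coefficient of $\|\wl-w_\mu\|$ on the right is $(1-\lambda)/\lambda$, which is not less than one; absorption still works because the left side carries the coefficient $1/\lambda$ and the difference of the two is $1$, but what comes out is
\[
\|\wl-w_\mu\|\ \leq\ \frac{2|\lambda-\mu|}{\lambda\mu}\,\|w_\mu\|\ \leq\ \frac{2}{\lambda}\,\Big|1-\frac{\lambda}{\mu}\Big|\,\|\T(0)\|.
\]
This is weaker than the claim by a factor $1/\lambda$, which blows up precisely in the regime $\lambda\to 0$ where the proposition is subsequently applied; and symmetrizing the roles of $\lambda$ and $\mu$ only replaces $\|w_\mu\|$ by $\|\wl\|$ while leaving the $\frac{1}{\lambda\mu}$ factor intact, so the fixes you suggest do not repair this.

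The missing idea is to run the argument in two steps at two different normalizations, as the paper does. First stay un-normalized: from $(1-\lambda)\Wl-(1-\mu)W_\mu=(1-\lambda)(\Wl-W_\mu)+(\mu-\lambda)W_\mu$ and non expansiveness, absorb $(1-\lambda)\|\Wl-W_\mu\|$ to get $\lambda\|\Wl-W_\mu\|\leq|\lambda-\mu|\,\|W_\mu\|$. Then pass to normalized values via the identity $\wl-w_\mu=\lambda(\Wl-W_\mu)+(\lambda-\mu)W_\mu$: the factor $\lambda$ multiplying $\Wl-W_\mu$ exactly compensates the $\lambda$ lost in the absorption, so each of the two terms is bounded by $|\lambda-\mu|\,\|W_\mu\|$ (this is where the constant $2$ comes from), and $\|W_\mu\|\leq\|\T(0)\|/\mu$, which follows from the first part, gives the clean bound $2\big|1-\frac{\lambda}{\mu}\big|\,\|\T(0)\|$.
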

\begin{proof}
First, one has:
 $$
\|\Wl \| - \|\T (0) \| \leq \|\Wl  - \T (0) \| = \|\T ( [1- \lambda] \Wl )  - \T (0) \| \leq  (1- \lambda) \|\Wl \|
$$
which implies $ \lambda \|\Wl \|  \leq  \| \T (0) \|$.\\
Moreover :
\begin{eqnarray*}
\|\Wl - W_\mu \|& =&  \|\T ( [1- \lambda] \Wl) - \T ( [1- \mu]  W_\mu) \| \\
&\leq& \| [1- \lambda] \Wl -  [1- \mu]  W_\mu \|\\
&\leq& (1- \lambda) \|\Wl - W_\mu \| +  | \lambda - \mu \| \| W_\mu \|
\end{eqnarray*} thus
$$
 \lambda \|\Wl - W_\mu \| \leq   | \lambda - \mu \| \| W_\mu \|\leq \frac{| \lambda - \mu |}{\mu} \| \T(0) \|.
 $$
On the other hand:
$$
\| \wl - w_\mu \| \leq  \lambda \|\Wl - W_\mu \| +  | \lambda - \mu \| \| W_\mu \|
$$
hence
$$
\| \wl - w_\mu \| \leq 2   | \lambda - \mu \| \| W_\mu \|
$$
and the result  follows from $ \| W_\mu \| \leq \| \T (0) \| / \mu$.
\end{proof}

\subsection{Discounted values}$ $ \\
For any non expansive operator $\T$ on $Z$ and $h \in (0,1]$, introduce 
\[
W_\lambda^h=\T_h((1-\lambda h)W_\lambda^h)
\]
as the unique fixed point point of $u\mapsto \T_h ( (1-\lambda h)u)$ and define 
\begin{equation}\label{dh}
w_\lambda^h=\lambda W_\lambda^h = \lambda \T_h(\frac{1-\lambda h}{\lambda}w_\lambda^h).
\end{equation}
$W_\lambda^h$ is the un-normalized $\lambda$-evaluation computed through  a stage of duration $h$   using  the linearization $\T_h$ of $ \T$ and $w_\lambda^h$ is  the associated normalization. \\
Recall that for $h=1$, $\T_h = \T$ and $w_\lambda= w_\lambda^h$.

\begin{pro}\label{dcv}
$$
w_{\lambda}^h=w_\mu, \: \: \mbox{with }\ \mu= \frac{\lambda}{1+ \lambda - \lambda \: h }.
$$
\end{pro}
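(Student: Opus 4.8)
The plan is to show directly that $w_\lambda^h$ satisfies the fixed-point equation characterizing $w_\mu$, and then conclude by uniqueness of that fixed point. First I would start from the defining relation \eqref{dh},
\begin{equation*}
w_\lambda^h = \lambda\, \T_h\Bigl(\tfrac{1-\lambda h}{\lambda}\, w_\lambda^h\Bigr),
\end{equation*}
and substitute $\T_h = (1-h)\,Id + h\,\T$ to expand the right-hand side, which yields
\begin{equation*}
w_\lambda^h = (1-h)(1-\lambda h)\, w_\lambda^h + \lambda h\, \T\Bigl(\tfrac{1-\lambda h}{\lambda}\, w_\lambda^h\Bigr).
\end{equation*}

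Next I would collect the terms in $w_\lambda^h$ on the left. The coefficient that appears is $1-(1-h)(1-\lambda h)$, which simplifies to $h(1+\lambda-\lambda h)$. Dividing through by $h$ (legitimate since $h>0$) gives
\begin{equation*}
(1+\lambda-\lambda h)\, w_\lambda^h = \lambda\, \T\Bigl(\tfrac{1-\lambda h}{\lambda}\, w_\lambda^h\Bigr).
\end{equation*}

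The key step is then purely algebraic: with $\mu = \frac{\lambda}{1+\lambda-\lambda h}$ one has $1+\lambda-\lambda h = \frac{\lambda}{\mu}$, and a short computation gives the crucial identity $\frac{1-\lambda h}{\lambda} = \frac{1-\mu}{\mu}$. Substituting both into the previous display and cancelling $\lambda$, I obtain
\begin{equation*}
\frac{w_\lambda^h}{\mu} = \T\Bigl(\tfrac{1-\mu}{\mu}\, w_\lambda^h\Bigr),
\end{equation*}
that is, setting $W := w_\lambda^h/\mu$, the relation $W = \T((1-\mu)W)$. I should also record that $\mu \in (0,1]$, since $h\le 1$ forces $1+\lambda-\lambda h \ge 1$, so that $W_\mu$ and $w_\mu$ are indeed well defined.

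Finally I would invoke uniqueness: $W_\mu$ is by definition the unique fixed point of $z\mapsto \T((1-\mu)z)$, hence $W = W_\mu$ and therefore $w_\lambda^h = \mu W = \mu W_\mu = w_\mu$. I expect no serious obstacle; the only points requiring care are the arithmetic simplification of $1-(1-h)(1-\lambda h)$ and the verification of $\frac{1-\lambda h}{\lambda} = \frac{1-\mu}{\mu}$, which is precisely what makes the normalized discounted value invariant under this reparametrization. Existence and uniqueness of the fixed point (the map $z \mapsto \T((1-\mu)z)$ being a $(1-\mu)$-contraction by non-expansiveness of $\T$) are standard and already underlie Proposition \ref{basiclem}.
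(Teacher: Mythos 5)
Your proof is correct and follows essentially the same route as the paper: expand $\T_h=(1-h)Id+h\T$ in the fixed-point equation \eqref{dh}, collect the terms in $w_\lambda^h$ to get $(1+\lambda-\lambda h)w_\lambda^h=\lambda\T\bigl(\tfrac{1-\lambda h}{\lambda}w_\lambda^h\bigr)$, recognize this as the $\mu$-discounted equation, and conclude by uniqueness via non-expansiveness. The only differences are presentational: you make explicit the identity $\tfrac{1-\lambda h}{\lambda}=\tfrac{1-\mu}{\mu}$ and the check that $\mu\in(0,1]$, which the paper leaves implicit.
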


\begin{proof}
By definition of $\T_h$,
\begin{eqnarray*}
w_{\lambda}^h&=&\lambda ((1-h)Id+h\T)(\frac{1-\lambda h}{\lambda}w_\lambda^h)\\
&=&(1-h)(1-\lambda h)w_{\lambda}^h+\lambda h \T(\frac{1-\lambda h}{\lambda}w_\lambda^h).
\end{eqnarray*}
Hence 
\[
(1+\lambda-\lambda h)w_{\lambda}^h=\lambda \T(\frac{1-\lambda h}{\lambda}w_\lambda^h)
\]
which is   $$
w_{\lambda}^h=\mu \T(\frac{1-\mu}{\mu}w_\lambda^h)
$$
 for $\mu= \frac{\lambda}{1+ \lambda - \lambda \: h }$. \\
 The non expansiveness of $\T$ yields uniqueness, hence  the result.
\end{proof}

\subsection{Vanishing duration}$ $ \\
Introduce $ \D^h_\lambda$,   the auxiliary one stage operator associated to the $\lambda$-discounted  evaluation of $\T_h$, defined by
$$
 \D^h_\lambda \:z = \lambda \T_h[(\frac{1-\lambda h}{\lambda}) z]
 $$
 which is $(1 - \lambda h) $ contracting. \\
 In particular $ \D^h_\lambda\:  w_{\lambda}^h = w_{\lambda}^h$ and for $h=1$,  $ \D^1_\lambda\:  w_{\lambda} = w_{\lambda}$.
 \begin{pro}\label{d1}
 For any $z \in Z$
$$
w_{\lambda}^h=  \lim_{n \rightarrow + \infty} (\D^h_\lambda)^n z
$$
and $ w_{\lambda}^h   \rightarrow   w_\frac{\lambda}{1+ \lambda }$ as $h \rightarrow 0$ with
$$
\|w_{\lambda}^h- w_{\frac{\lambda}{1+\lambda}}\|\leq C \lambda h.
$$
\end{pro}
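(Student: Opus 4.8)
The plan is to treat the two assertions separately, each reducing to results already established. For the first, I would invoke the Banach fixed point theorem: the operator $\D^h_\lambda$ is a strict contraction, so its iterates converge to its unique fixed point from any starting point, and that fixed point has already been identified as $w_\lambda^h$. For the second, I would combine the identification $w_\lambda^h = w_\mu$ from Proposition \ref{dcv} with the Lipschitz estimate for the map $\mu \mapsto w_\mu$ from Proposition \ref{basiclem}, and then perform an elementary simplification to extract the announced rate.

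Concretely, for the first claim, recall that $\D^h_\lambda z = \lambda \T_h[(\frac{1-\lambda h}{\lambda})z]$ is $(1-\lambda h)$-contracting, and since $\lambda, h \in (0,1]$ one has $1 - \lambda h \in [0,1)$, so the contraction is strict. The iterates $(\D^h_\lambda)^n z$ therefore form a Cauchy sequence converging geometrically to the unique fixed point, which (as already noted) is $w_\lambda^h$; hence $w_\lambda^h = \lim_{n\to\infty}(\D^h_\lambda)^n z$ for every $z \in Z$.

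For the second claim, set $\mu_0 = \frac{\lambda}{1+\lambda}$. By Proposition \ref{dcv}, $w_\lambda^h = w_\mu$ with $\mu = \frac{\lambda}{1+\lambda - \lambda h}$, and $\mu \to \mu_0$ as $h \to 0$. Applying Proposition \ref{basiclem} to compare $w_\mu$ and $w_{\mu_0}$ gives $\|w_\mu - w_{\mu_0}\| \leq 2|1 - \frac{\mu}{\mu_0}|\,\|\T(0)\|$. A direct computation yields $\frac{\mu}{\mu_0} = \frac{1+\lambda}{1+\lambda-\lambda h}$, so that $|1 - \frac{\mu}{\mu_0}| = \frac{\lambda h}{1+\lambda - \lambda h}$; and since $1+\lambda-\lambda h = 1 + \lambda(1-h) \geq 1$ for $h \in (0,1]$, this quantity is at most $\lambda h$. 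Combining these estimates gives $\|w_\lambda^h - w_{\mu_0}\| \leq 2\lambda h\,\|\T(0)\|$, so the bound holds with $C = 2\|\T(0)\|$, and in particular $w_\lambda^h \to w_{\lambda/(1+\lambda)}$.

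As for difficulty, I do not expect a genuine obstacle here: both parts reduce immediately to the cited propositions. The only step requiring any care is the algebraic simplification of $|1 - \mu/\mu_0|$ together with the observation $1+\lambda-\lambda h \geq 1$, which removes the $h$-dependence from the denominator and produces the clean linear-in-$h$ bound.
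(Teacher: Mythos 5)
Your proof is correct and follows essentially the same route as the paper: the first claim via the $(1-\lambda h)$-contraction property of $\D^h_\lambda$ and the definition of $w_\lambda^h$ as its fixed point, and the second by combining Proposition \ref{dcv} with the Lipschitz estimate of Proposition \ref{basiclem}. Your version is merely more explicit about the algebra (the paper bounds $|1-\tfrac{1+\lambda-\lambda h}{1+\lambda}|$ rather than $|1-\tfrac{1+\lambda}{1+\lambda-\lambda h}|$, which is the same application of Proposition \ref{basiclem} with the two discount rates in the opposite roles), and about the identification $C = 2\|\T(0)\|$.
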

\begin{proof}
The first equality follows from definition  (\ref{dh}).\\
By Proposition \ref{basiclem}
\begin{eqnarray*}
\left\|w_\lambda^h-w_{\frac{\lambda}{1+\lambda}}\right\|&\leq&C\|1-\frac{1+\lambda-\lambda h}{1+\lambda}\|\\ 
&\leq&C\lambda h.
\end{eqnarray*}
\end{proof}
More generally one can consider a sequence of step sizes $\{h_i\}$ with $h_i \leq h$ and $\sum_i h_i = + \infty$ and the associated operator $ \Pi_i  \D^{h_i} _\lambda$.

 \begin{lem}
  For any $z \in Z$ and any sequence $h_1,\cdots,h_n$, 
  \[
   \|\Pi_{i=1}^n  \D^{h_i} _\lambda (z)-w_{\frac{\lambda}{1+\lambda}} \|\leq 2\|\T(0)\| \max_{1\leq i\leq n} h_i +   (\|\T(0)\|+\|z\|) \Pi_{i=1}^n (1-\lambda h_i).
    \] 
\end{lem}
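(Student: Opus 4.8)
The plan is to run a telescoping contraction estimate against the common target $w_{\frac{\lambda}{1+\lambda}}$, which I abbreviate by $u$. Set $z_0=z$ and $z_k=\D^{h_k}_\lambda z_{k-1}$, so that $z_n=\Pi_{i=1}^n \D^{h_i}_\lambda(z)$; the order of the factors is irrelevant for the bound below, since it depends on the $h_i$ only through a product, a maximum, and a sum. Since $\D^{h_k}_\lambda$ is $(1-\lambda h_k)$-contracting, splitting $z_k-u=(\D^{h_k}_\lambda z_{k-1}-\D^{h_k}_\lambda u)+(\D^{h_k}_\lambda u-u)$ gives
$$
\|z_k-u\|\leq (1-\lambda h_k)\|z_{k-1}-u\|+\epsilon_k,\qquad \epsilon_k:=\|\D^{h_k}_\lambda u-u\|.
$$
Unfolding this linear recursion yields
$$
\|z_n-u\|\leq \Big(\Pi_{i=1}^n(1-\lambda h_i)\Big)\|z-u\|+\sum_{k=1}^n \epsilon_k\,\Pi_{i=k+1}^n(1-\lambda h_i).
$$

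For the first term I would bound $\|z-u\|\leq \|z\|+\|u\|$ and use $\|u\|=\|w_{\frac{\lambda}{1+\lambda}}\|\leq \|\T(0)\|$ from Proposition \ref{basiclem}; this already produces the factor $(\|\T(0)\|+\|z\|)\,\Pi_i(1-\lambda h_i)$ appearing in the statement.

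The crux, and the main obstacle, is the consistency estimate $\epsilon_k\leq 2\lambda h_k^2\|\T(0)\|$, that is, the fact that the local error of $\D^h_\lambda$ at $u$ is \emph{second} order in $h$. A merely first-order bound, such as the one on $\|w^h_\lambda-u\|$ in Proposition \ref{d1}, would only give a constant after summation, not the factor $\max_i h_i$. To obtain it I would expand $\D^h_\lambda u=(1-h)(1-\lambda h)u+\lambda h\,\T(\frac{1-\lambda h}{\lambda}u)$ and invoke the fixed-point equation for $u=w_{\frac{\lambda}{1+\lambda}}$, which reads $\lambda\T(\frac{u}{\lambda})=(1+\lambda)u$ (since $\frac{1-\mu}{\mu}=\frac{1}{\lambda}$ for $\mu=\frac{\lambda}{1+\lambda}$). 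Replacing the argument $\frac{1-\lambda h}{\lambda}u$ by $\frac{u}{\lambda}$ costs at most $\lambda h\|u\|$ by non-expansiveness of $\T$, and after collecting terms the first-order contributions cancel, leaving $\D^h_\lambda u-u=\lambda h^2 u+e$ with $\|e\|\leq \lambda h^2\|u\|$, whence $\epsilon_k\leq 2\lambda h_k^2\|u\|\leq 2\lambda h_k^2\|\T(0)\|$.

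Finally I would close the sum using the telescoping identity $\sum_{k=1}^n \lambda h_k\,\Pi_{i=k+1}^n(1-\lambda h_i)=1-\Pi_{i=1}^n(1-\lambda h_i)\leq 1$, proved by writing $\lambda h_k=1-(1-\lambda h_k)$. Bounding $\epsilon_k\leq 2\|\T(0)\|(\max_i h_i)\,\lambda h_k$ via $h_k^2\leq h_k\max_i h_i$, the accumulated term is at most $2\|\T(0)\|\max_i h_i$, which together with the first term gives exactly the asserted inequality.
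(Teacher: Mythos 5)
Your proof is correct and is essentially the paper's own argument: the same contraction recursion toward $u=w_{\frac{\lambda}{1+\lambda}}$, with the identical key step, namely the second-order consistency bound $\|\D^{h}_\lambda u-u\|\leq 2\lambda h^2\|u\|\leq 2\lambda h^2\|\T(0)\|$ derived from the fixed-point identity $(1+\lambda)u=\lambda\T(u/\lambda)$ and non-expansiveness (a small slip: the argument swap inside $\T$ costs $h\|u\|$, not $\lambda h\|u\|$ as you wrote, but your stated conclusion $\|e\|\leq \lambda h\cdot h\|u\|=\lambda h^2\|u\|$ is the correct one). The only difference is bookkeeping at the end: the paper closes the recursion $d_k\leq(1-\lambda h_k)d_{k+1}+\lambda h_k\,c_k$, $c_k=2\|\T(0)\|h_k$, by the convexity observation $d_k\leq\max(d_{k+1},c_k)$ starting from $d_{n+1}=0$, whereas you unfold it and invoke the telescoping identity $\sum_{k}\lambda h_k\,\Pi_{i>k}(1-\lambda h_i)=1-\Pi_{i}(1-\lambda h_i)\leq 1$; both devices yield exactly the same bound.
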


\begin{proof}
By non expansiveness, \[
 \|\Pi_{i=1}^n  \D^{h_i} _\lambda (z)  - \Pi_{i=1}^n  \D^{h_i} _\lambda (w_{\frac{\lambda}{1+\lambda}})\| \leq  \|z-w_{\frac{\lambda}{1+\lambda}}\|\:  \Pi_{i=1}^n (1-\lambda h_i)\leq   (\|\T(0)\|+\|z\|) \: \Pi_{i=1}^n (1-\lambda h_i).
 \]
Hence it is enough to show that  $\|\Pi_{i=1}^n  \D^{h_i} _\lambda (w_{\frac{\lambda}{1+\lambda}})-w_{\frac{\lambda}{1+\lambda}} \|\leq 2\|\T(0)\| \max_{1\leq i\leq n} h_i$.\\
Let $d_k=\|\Pi_{i=k}^n  \D^{h_i} _\lambda (w_{\frac{\lambda}{1+\lambda}})-w_{\frac{\lambda}{1+\lambda}} \|$. Then
\begin{eqnarray*}
d_k&\leq&\|\Pi_{i=k}^n  \D^{h_i} _\lambda (w_{\frac{\lambda}{1+\lambda}})- \D^{h_{k}} _\lambda (w_{\frac{\lambda}{1+\lambda}})\|  + \|\D^{h_{k}} _\lambda (w_{\frac{\lambda}{1+\lambda}})-w_{\frac{\lambda}{1+\lambda}} \|\\
&\leq&(1-\lambda h_{k})d_{k+1} +\|\D^{h_{k}} _\lambda (w_{\frac{\lambda}{1+\lambda}})-w_{\frac{\lambda}{1+\lambda}}\|.
\end{eqnarray*}
Now, for any $h$,
\begin{eqnarray*}
\|\D^{h} _\lambda (w_{\frac{\lambda}{1+\lambda}})-w_{\frac{\lambda}{1+\lambda}}\|&=&\|(1-h)(1-\lambda h)w_{\frac{\lambda}{1+\lambda}} +\lambda h \T(\frac{1-\lambda h}{\lambda}w_{\frac{\lambda}{1+\lambda}}) -w_{\frac{\lambda}{1+\lambda}}   \|\\
&\leq& \lambda h^2 \| w_{\frac{\lambda}{1+\lambda}} \| + \|\lambda h \T(\frac{1-\lambda h}{\lambda}w_{\frac{\lambda}{1+\lambda}}) -h(1+\lambda)w_{\frac{\lambda}{1+\lambda}} \|\\
&=& \lambda h^2 \| w_{\frac{\lambda}{1+\lambda}} \|+ \|\lambda h \T(\frac{1-\lambda h}{\lambda}w_{\frac{\lambda}{1+\lambda}}) -\lambda h\T(\frac{1}{\lambda}w_{\frac{\lambda}{1+\lambda}}) \|\\
&\leq& 2\lambda h^2 \| w_{\frac{\lambda}{1+\lambda}}\|\\
&\leq& 2\|\T(0)\|\lambda h^2.
\end{eqnarray*}

Hence 
\begin{eqnarray*}
d_k&\leq&(1-\lambda h_{k})d_{k+1}+ 2\|\T(0)\|\lambda h_k^2\\
&=&(1-\lambda h_{k})d_{k+1}+\lambda h_k (2\|\T(0)\| h_k)\\
&\leq&\max(d_{k+1}, 2\|\T(0)\|  h_k).
\end{eqnarray*}

Since $d_{n+1}=0$ we get $d_1\leq  2\|\T(0)\| \max_{1\leq i\leq n} h_i$ as claimed.
\end{proof}

In particular one gets 

 \begin{pro}\label{d2}
 For any $z \in Z$, and any  sequence $\{h_i\}$ with $h_i \leq h$ and $\sum_i h_i = + \infty$, 
 $$
 \| \Pi_{i=1}^\infty  \D^{h_i} _\lambda (z)-w_{\frac{\lambda}{1+\lambda}}\|\leq 2\|\T(0)\| h.
 $$
\end{pro}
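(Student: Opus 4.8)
The plan is to derive Proposition \ref{d2} directly from the preceding Lemma by sending the horizon to infinity. Applying that Lemma to the prescribed sequence $\{h_i\}$, truncated at $h_1,\dots,h_n$, and using that $h_i\leq h$ for every $i$ (so $\max_{1\leq i\leq n} h_i\leq h$), one obtains for every $n$
\[
\left\|\Pi_{i=1}^n \D^{h_i}_\lambda(z)-w_{\frac{\lambda}{1+\lambda}}\right\|\leq 2\|\T(0)\|\,h+(\|\T(0)\|+\|z\|)\,\Pi_{i=1}^n(1-\lambda h_i).
\]
It then suffices to let $n\to\infty$ and check that the final product term vanishes, the constant term already being the announced bound.

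To see that $\Pi_{i=1}^n(1-\lambda h_i)\to 0$, I would pass to logarithms: since $0\leq \lambda h_i\leq \lambda h\leq 1$ and $\log(1-t)\leq -t$, one has $\sum_{i=1}^n \log(1-\lambda h_i)\leq -\lambda\sum_{i=1}^n h_i\to-\infty$ because $\sum_i h_i=+\infty$; hence the partial products tend to $0$. (If some $\lambda h_i=1$ the product is already $0$ and there is nothing to show.) It then remains to justify that the infinite composition $\Pi_{i=1}^\infty\D^{h_i}_\lambda(z)$ is a genuine limit in the Banach space $Z$. Writing $P_n=\Pi_{i=1}^n\D^{h_i}_\lambda(z)$ and, for $m>n$, factoring $P_m=\D^{h_1}_\lambda\circ\cdots\circ\D^{h_n}_\lambda(Q_{n,m})$ with $Q_{n,m}=\Pi_{i=n+1}^m\D^{h_i}_\lambda(z)$, the $(1-\lambda h_i)$-contraction of each $\D^{h_i}_\lambda$ gives
\[
\|P_m-P_n\|\leq \Pi_{i=1}^n(1-\lambda h_i)\,\|Q_{n,m}-z\|.
\]

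To control $\|Q_{n,m}-z\|$ uniformly I would note that $\D^{h}_\lambda(0)=\lambda h\,\T(0)$, so that for $R=\max(\|z\|,\|\T(0)\|)$ and $\|u\|\leq R$ one has $\|\D^{h}_\lambda(u)\|\leq(1-\lambda h)\|u\|+\lambda h\|\T(0)\|\leq R$; thus the ball of radius $R$ is stable under every $\D^{h_i}_\lambda$, whence $\|Q_{n,m}\|\leq R$ and $\|Q_{n,m}-z\|\leq 2\|z\|+\|\T(0)\|$. Combined with $\Pi_{i=1}^n(1-\lambda h_i)\to 0$, this shows that $(P_n)$ is Cauchy, so the limit $P_\infty=\Pi_{i=1}^\infty\D^{h_i}_\lambda(z)$ exists by completeness of $Z$. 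Passing to the limit in the displayed estimate of the first paragraph, the product term disappears and yields $\|P_\infty-w_{\frac{\lambda}{1+\lambda}}\|\leq 2\|\T(0)\|\,h$, as claimed.

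The only genuinely non-routine point is the convergence of the infinite composition, i.e. the Cauchy estimate above; the stability of the ball of radius $R$ under the contractions $\D^{h_i}_\lambda$ is precisely what keeps $\|Q_{n,m}-z\|$ uniformly bounded and lets the vanishing product close the argument. Everything else is a direct substitution into the Lemma together with the elementary fact that $\sum_i h_i=+\infty$ forces $\Pi_i(1-\lambda h_i)\to 0$.
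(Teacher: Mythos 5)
Your proof is correct and follows exactly the paper's route: the paper deduces the proposition from the preceding Lemma "in particular," i.e. by bounding $\max_i h_i\leq h$ and letting $n\to\infty$ so that $\Pi_{i=1}^n(1-\lambda h_i)\to 0$ thanks to $\sum_i h_i=+\infty$. Your additional Cauchy argument justifying the existence of $\Pi_{i=1}^\infty \D^{h_i}_\lambda(z)$ is a detail the paper leaves implicit, but it is a correct filling-in rather than a different approach.
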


\subsection{Asymptotic properties}$ $ \\
An easy consequence of Proposition \ref{dcv} is that for a given  $h$, $w_\lambda ^h$  has the same asymptotic behavior, as $\lambda$ tends to 0,  as $w_\lambda$.

\begin{pro}\label{d3}
\[
\|w_\lambda^h-w_\lambda\| \leq 2C\lambda.\]
\end{pro}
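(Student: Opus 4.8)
The plan is to reduce Proposition \ref{d3} entirely to the two facts already in hand: the exact identification of $w_\lambda^h$ as a discounted value of $\T$ itself (Proposition \ref{dcv}) and the Lipschitz-type comparison of discounted values of a single operator (the second inequality of Proposition \ref{basiclem}). First I would invoke Proposition \ref{dcv} to write $w_\lambda^h = w_\mu$ with $\mu = \frac{\lambda}{1+\lambda-\lambda h}$. This is the crucial move: it converts the quantity to be estimated, $\|w_\lambda^h - w_\lambda\|$, into $\|w_\mu - w_\lambda\|$, a comparison of two genuine discounted values of the \emph{same} operator $\T$, which is exactly the situation Proposition \ref{basiclem} addresses.

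Next I would apply the second inequality of Proposition \ref{basiclem}, which yields $\|w_\lambda - w_\mu\| \leq 2\,|1 - \tfrac{\lambda}{\mu}|\,\|\T(0)\|$. The single computation needed is to evaluate the ratio: since $\mu = \frac{\lambda}{1+\lambda-\lambda h}$ one has $\frac{\lambda}{\mu} = 1 + \lambda - \lambda h$, hence $|1 - \tfrac{\lambda}{\mu}| = \lambda(1-h)$. Because $h \in (0,1]$ this is bounded by $\lambda$, so $\|w_\lambda^h - w_\lambda\| \leq 2\lambda(1-h)\,\|\T(0)\| \leq 2\lambda\,\|\T(0)\|$, which is the asserted bound with $C = \|\T(0)\|$.

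I do not expect a genuine obstacle here, since the analytic content was already absorbed into Propositions \ref{dcv} and \ref{basiclem}; the present statement merely records that the reparametrization $\lambda \mapsto \mu = \frac{\lambda}{1+\lambda-\lambda h}$ displaces the discount factor by an amount of order $\lambda$ \emph{uniformly in} $h$. The one point I would double-check is the orientation of the ratio in Proposition \ref{basiclem}, i.e.\ whether one uses $|1 - \tfrac{\lambda}{\mu}|$ or $|1 - \tfrac{\mu}{\lambda}|$; but since $\frac{\mu}{\lambda} = \frac{1}{1+\lambda-\lambda h}$ gives $|1 - \tfrac{\mu}{\lambda}| = \frac{\lambda(1-h)}{1+\lambda-\lambda h} \leq \lambda(1-h)$ as well (the denominator being at least $1$), either orientation produces the same linear-in-$\lambda$ estimate, and the conclusion is unaffected.
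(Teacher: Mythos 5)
Your proposal is correct and follows exactly the paper's own route: identify $w_\lambda^h=w_\mu$ with $\mu=\frac{\lambda}{1+\lambda-\lambda h}$ via Proposition \ref{dcv}, then apply the second inequality of Proposition \ref{basiclem} with $\frac{\lambda}{\mu}=1+\lambda-\lambda h$ to get the bound $2\|\T(0)\|\lambda(1-h)\leq 2C\lambda$. Your check of the orientation of the ratio is a sensible precaution (the paper's displayed proof even contains a typo at that spot), but as you note it does not affect the conclusion.
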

\begin{proof}
By Proposition \ref{basiclem},
\begin{eqnarray*}
\left\|w_\lambda^h-w_\lambda\right\|&=&  2 C  | 1 - (1 + \lambda - \lambda h |  \\
&\leq& 2C \lambda.
\end{eqnarray*}
\end{proof}

To generalize this property in order to apply it  to games with varying duration we need an additional assumption on the operator $\T$.

\begin{defi}
The operator $\T$ satisfies assumption (H) if there exists two nondecreasing functions $k:]0,1]\rightarrow\mathbb{R}^+$ and $\ell:[0,+\infty]\rightarrow\mathbb{R}^+$  with $k(\lambda)=o(\sqrt{\lambda})$ as $\lambda$ goes to 0 and
\[
\|\D^{1} _\lambda (z)-\D^{1} _\mu (z)\|\leq k(|\lambda-\mu|) \ell(\|z\|)
\]
for all $(\lambda,\mu)\in]0,1]^2$ and $z\in Z$.
\end{defi}

 \begin{pro}\label{d4}
If $\T$ satisfies (H) then for any $z \in Z$ and any  sequence $\{h_i\}$ with $\sum_i h_i = + \infty$, $  \| \Pi_{i=1}^\infty  \D^{h_i} _\lambda (z)-w_\lambda\|$ goes to 0 as $\lambda$ goes to 0.
\end{pro}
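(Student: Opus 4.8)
The plan is to exploit a structural identity that rewrites each $\D^h_\lambda$ as a convex combination of the identity and a \emph{full}-duration operator $\D^1_\mu$, and then to replace $\D^1_\mu$ by $\D^1_\lambda$ using assumption (H). First I would record the identity
\[
\D^h_\lambda = (1-\alpha_h)\, Id + \alpha_h\, \D^1_{\mu_h}, \qquad \alpha_h = h(1+\lambda-\lambda h), \quad \mu_h = \frac{\lambda}{1+\lambda-\lambda h}.
\]
This is immediate from the computation already performed in the proof of Proposition \ref{dcv}: since $\frac{1-\mu_h}{\mu_h} = \frac{1-\lambda h}{\lambda}$, one has $\T(\frac{1-\lambda h}{\lambda}z) = \frac{1}{\mu_h}\D^1_{\mu_h}(z)$, and inserting this into $\D^h_\lambda z = (1-h)(1-\lambda h)z + \lambda h\,\T(\frac{1-\lambda h}{\lambda}z)$ yields the claim, the two coefficients summing to $1$. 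Writing $\alpha_i,\mu_i$ for the values attached to $h_i$, I would note the two elementary facts $\alpha_i\ge h_i$ and $|\lambda-\mu_i| = \frac{\lambda^2(1-h_i)}{1+\lambda-\lambda h_i}\le\lambda^2$.

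Next I introduce the frozen comparison maps $\widetilde\D_i = (1-\alpha_i)\, Id + \alpha_i\, \D^1_\lambda$, obtained by replacing the per-step discount $\mu_i$ by $\lambda$. Since $\D^1_\lambda$ is $(1-\lambda)$-contracting and fixes $w_\lambda$, each $\widetilde\D_i$ is $(1-\alpha_i\lambda)$-contracting and fixes $w_\lambda$. Hence, with $\widetilde P_n = \widetilde\D_1\circ\cdots\circ\widetilde\D_n$,
\[
\|\widetilde P_n(z) - w_\lambda\| = \|\widetilde P_n(z) - \widetilde P_n(w_\lambda)\| \le \Big(\prod_{i=1}^n (1-\alpha_i\lambda)\Big)\,\|z-w_\lambda\|,
\]
which tends to $0$ as $n\to\infty$ because $\sum_i\alpha_i\ge\sum_i h_i = +\infty$. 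Thus the frozen scheme converges \emph{exactly} to $w_\lambda$ at each fixed $\lambda$.

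It remains to compare $P_n=\D^{h_1}_\lambda\circ\cdots\circ\D^{h_n}_\lambda$ with $\widetilde P_n$. The per-step discrepancy is $\|\D^{h_m}_\lambda(\eta)-\widetilde\D_m(\eta)\| = \alpha_m\|\D^1_{\mu_m}(\eta)-\D^1_\lambda(\eta)\|\le\alpha_m\,k(|\lambda-\mu_m|)\,\ell(\|\eta\|)\le\alpha_m\,k(\lambda^2)\,\ell(\|\eta\|)$, by (H) and the monotonicity of $k$. The iterates stay bounded: as $\D^{h}_\lambda$ is $(1-\lambda h)$-contracting with $\D^h_\lambda(0)=\lambda h\,\T(0)$, induction gives $\|\xi\|\le\max(\|z\|,\|\T(0)\|)=:R$ for every partial product $\xi$, so every intermediate point $\eta$ in the telescoping satisfies $\ell(\|\eta\|)\le\ell(R)$. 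Telescoping $P_n-\widetilde P_n$ over the $n$ stages, contracting each already-frozen prefix by $\prod_{j<m}(1-\alpha_j\lambda)$ and using $\sum_m\alpha_m\prod_{j<m}(1-\alpha_j\lambda)=\frac1\lambda\big(1-\prod_j(1-\alpha_j\lambda)\big)\le\frac1\lambda$, I obtain
\[
\|P_n(z)-\widetilde P_n(z)\| \le \frac{k(\lambda^2)}{\lambda}\,\ell(R),
\]
uniformly in $n$. Combining the two displays and letting $n\to\infty$ yields $\|\Pi_{i=1}^\infty\D^{h_i}_\lambda(z) - w_\lambda\| \le \frac{k(\lambda^2)}{\lambda}\,\ell(R)$, and since $k(x)=o(\sqrt x)$ we get $k(\lambda^2)/\lambda\to 0$ as $\lambda\to 0$, which is the claim.

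The main obstacle is the error accounting in the replacement step. The crux is to observe that the discount mismatch $|\lambda-\mu_i|$ created by passing from $\D^{h_i}_\lambda$ to the full operator is only of order $\lambda^2$, so that the accumulated replacement error scales like $k(\lambda^2)/\lambda$; this is precisely where the specific form $k=o(\sqrt{\cdot})$ of (H) enters, matching the $\lambda^2$ mismatch against the $1/\lambda$ from summing over the (infinitely many) stages. A secondary point requiring care is the uniform boundedness of the iterates, needed to control $\ell(\|\cdot\|)$ along the product.
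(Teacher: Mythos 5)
Your proof is correct, and its mathematical heart is the same as the paper's: the passage from $\D^h_\lambda$ to the full-duration operator $\D^1_{\mu_h}$ with $\mu_h=\frac{\lambda}{1+\lambda-\lambda h}$, the estimate $|\lambda-\mu_h|\leq \lambda^2$, and the balancing of $k(\lambda^2)$ against the factor $1/\lambda$ coming from accumulating discounted increments, which is exactly where $k=o(\sqrt{\cdot})$ enters. The difference is in how the error is propagated. The paper first reduces to $z=w_\lambda$ (the infinite product is independent of $z$ by contraction and $\sum_i h_i=+\infty$); after that reduction, assumption (H) is only ever invoked at the single point $w_\lambda$, whose norm is controlled by $\|\T(0)\|$ via Proposition \ref{basiclem} --- your convex-combination identity appears there implicitly, in the chain $\|\D^h_\lambda(w_\lambda)-w_\lambda\|=h(1+\lambda-\lambda h)\|\D^1_{\mu_h}(w_\lambda)-\D^1_\lambda(w_\lambda)\|\leq 2h\,\ell(\|\T(0)\|)\,k(\lambda^2)$ --- and the accumulation is closed by the recursion $d_n\leq(1-\lambda h_n)d_{n-1}+\lambda h_n\bigl[2\ell(\|\T(0)\|)k(\lambda^2)/\lambda\bigr]$ together with the one-line remark that this forces $d_n\leq\max\bigl(d_{n-1},\,2\ell(\|\T(0)\|)k(\lambda^2)/\lambda\bigr)$. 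You instead keep $z$ arbitrary, compare against the frozen scheme $\widetilde{\D}_i=(1-\alpha_i)Id+\alpha_i\D^1_\lambda$, and telescope; this costs you two extra (correct) steps that the paper avoids --- the uniform bound $R$ on the iterates, needed because (H) is applied at arbitrary intermediate points, and the geometric-sum identity $\sum_m\alpha_m\prod_{j<m}(1-\alpha_j\lambda)\leq 1/\lambda$ --- but it buys an explicit, $n$-uniform bound $k(\lambda^2)\ell(R)/\lambda$ valid for every starting point, without invoking the independence-of-$z$ reduction. Both routes give the same rate $O(k(\lambda^2)/\lambda)$, and both, like the paper, in effect bound $\limsup_n$ of the finite products rather than addressing existence of the infinite product itself.
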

\begin{proof}
Since $\D^{h_i} _\lambda$ is $1-\lambda h$ contracting and $\sum_i h_i = + \infty$, $ \Pi_{i=1}^\infty \D^{h_i} _\lambda (z)$ is independent of $z$ and one may assume $z=w_\lambda$. \\Define $d_n=\| \Pi_{i=1}^n  \D^{h_i} _\lambda (w_\lambda)-w_\lambda\|$ hence $d_0=0$ and
\begin{eqnarray*}
d_n&\leq&\| \Pi_{i=1}^n  \D^{h_i} _\lambda (w_\lambda)- \D^{h_n} _\lambda (w_\lambda)\| +\|\D^{h_n} _\lambda (w_\lambda)-w_\lambda\|\\
&\leq & (1-\lambda h_n)d_{n-1}+\|\D^{h_n} _\lambda (w_\lambda)-w_\lambda\|.
\end{eqnarray*}
For any $h$, 
\begin{eqnarray*}
\|\D^{h} _\lambda (w_\lambda)-w_\lambda\|&\leq&\|(1-h)(1-\lambda h)w_\lambda+\lambda h \T (\frac{1-\lambda h}{\lambda}w_\lambda)-w_\lambda\|\\
&=&h(1+\lambda-\lambda h)\left\|\frac{\lambda}{1+\lambda-\lambda h} \T (\frac{1-\lambda h}{\lambda}w_\lambda)-w_\lambda\right\|\\
&=&h(1+\lambda-\lambda h)\left\|\D^{1} _\mu(w_\lambda)-\D^{1} _\lambda(w_\lambda)\right\| \text{ with $\mu=\frac{\lambda}{1+\lambda-\lambda h}$}\\
&\leq& h(1+\lambda-\lambda h)\ell(\|w_\lambda\|) k\left(\frac{\lambda^2(1-h)}{1+\lambda-\lambda h}\right) \text{ by (H)}\\
&\leq&2h\ell(\|T(0)\|)k(\lambda^2).
\end{eqnarray*}
Hence 
\begin{eqnarray*}
d_n&\leq& (1-\lambda h_n)d_{n-1}+ \lambda h_n \left[2\ell(\|T(0)\|)\frac{k(\lambda^2)}{\lambda}\right]\\
&\leq& \max \left(d_{n-1}, 2\ell(\|T(0)\|)\frac{k(\lambda^2)}{\lambda}\right)
\end{eqnarray*}
and $d_n\leq 2\ell(\|T(0)\|)\frac{k(\lambda^2)}{\lambda}$ for all $n$. The result follows since by assumption $k(\lambda^2)=o(\lambda)$.
\end{proof}

\subsection{Invariant properties}$ $ \\
We now consider another family of operators parametrized by $\alpha\in [0,1]$.\\
%
%
Define for $\alpha \in [0,1]$, $\widetilde{\T}_\alpha$ by
 \begin{equation}\label{tilde}
\widetilde{\T}_\alpha z = (1-\alpha)z +\T(\alpha  z ).
\end{equation}
 Thus $\widetilde{\T}_\alpha$   is non expansive, hence for $\lambda\in]0,1]$ one can consider the associated $\lambda$-discounted  fixed point  ${\widetilde{w}}^\alpha_\lambda$  defined by
\[
{\widetilde{w}}^\alpha_\lambda= \lambda \widetilde{\T}_\alpha(\frac{1-\lambda}{\lambda}\widetilde{w}^\alpha_\lambda).
\]
Note that for $\alpha = 1$, ${\widetilde{w}}^\alpha_\lambda = {w}_\lambda$.
\begin{pro}\label{d5}
$$
\widetilde{w}^\alpha_\lambda=w_\mu, \: \: \mbox{with }\ \mu= \frac{\lambda}{ \alpha + \lambda - \lambda \: \alpha }.
$$
\end{pro}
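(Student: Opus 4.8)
The plan is to follow the same pattern as the proof of Proposition~\ref{dcv}: substitute the explicit form of $\widetilde{\T}_\alpha$ into the fixed-point relation defining $\widetilde{w}^\alpha_\lambda$, regroup the terms that are linear in $\widetilde{w}^\alpha_\lambda$, and recognize the resulting identity as the canonical fixed-point equation $w=\mu\,\T(\frac{1-\mu}{\mu}w)$ characterizing $w_\mu$; uniqueness then comes for free from non expansiveness of $\T$.

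Concretely, writing $w=\widetilde{w}^\alpha_\lambda$ and inserting $\widetilde{\T}_\alpha z=(1-\alpha)z+\T(\alpha z)$ with $z=\frac{1-\lambda}{\lambda}w$ into $w=\lambda\,\widetilde{\T}_\alpha(\frac{1-\lambda}{\lambda}w)$, I get $w=(1-\alpha)(1-\lambda)w+\lambda\,\T(\frac{\alpha(1-\lambda)}{\lambda}w)$. Moving the linear part to the left and using $1-(1-\alpha)(1-\lambda)=\alpha+\lambda-\lambda\alpha$ turns this into $(\alpha+\lambda-\lambda\alpha)\,w=\lambda\,\T(\frac{\alpha(1-\lambda)}{\lambda}w)$.

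Dividing by $\alpha+\lambda-\lambda\alpha$ (which is $\geq\lambda>0$, so the operation is legitimate and moreover forces $\mu\in(0,1]$) and setting $\mu=\frac{\lambda}{\alpha+\lambda-\lambda\alpha}$, the equation reads $w=\mu\,\T(\frac{\alpha(1-\lambda)}{\lambda}w)$. The only computation to verify is $\frac{\alpha(1-\lambda)}{\lambda}=\frac{1-\mu}{\mu}$, which follows from $\frac{1}{\mu}=\frac{\alpha+\lambda-\lambda\alpha}{\lambda}$, whence $\frac{1-\mu}{\mu}=\frac{\alpha+\lambda-\lambda\alpha-\lambda}{\lambda}=\frac{\alpha(1-\lambda)}{\lambda}$. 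Thus $w$ solves $w=\mu\,\T(\frac{1-\mu}{\mu}w)$, which is exactly the relation satisfied by $w_\mu=\mu W_\mu$ (recall $W_\mu=\T((1-\mu)W_\mu)$).

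Finally, since $\T$ is non expansive the map $u\mapsto\mu\,\T(\frac{1-\mu}{\mu}u)$ is $(1-\mu)$-contracting and hence has a unique fixed point, so $w=w_\mu$. I do not anticipate any genuine obstacle; the argument is entirely computational and the only points requiring care are the two algebraic identities $1-(1-\alpha)(1-\lambda)=\alpha+\lambda-\lambda\alpha$ and $\frac{\alpha(1-\lambda)}{\lambda}=\frac{1-\mu}{\mu}$, together with the check that $\mu$ lands in $(0,1]$ so that $w_\mu$ is well defined. As sanity checks one recovers $\mu=\lambda$ at $\alpha=1$ (consistent with the observation that $\widetilde{w}^1_\lambda=w_\lambda$) and $\mu=1$ at $\alpha=0$.
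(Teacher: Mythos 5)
Your proof is correct and follows essentially the same route as the paper's: substitute the definition of $\widetilde{\T}_\alpha$ into the fixed-point relation, regroup to get $(\alpha+\lambda-\lambda\alpha)\widetilde{w}^\alpha_\lambda=\lambda\T(\frac{\alpha(1-\lambda)}{\lambda}\widetilde{w}^\alpha_\lambda)$, identify this as the $\mu$-discounted fixed-point equation, and invoke uniqueness via non expansiveness. Your added checks (that $\mu\in(0,1]$ and the sanity checks at $\alpha=0,1$) are welcome but not a departure from the paper's argument.
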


\begin{proof}
Direct computation gives 
\begin{eqnarray*}
{\widetilde{w}}^\alpha_\lambda& = &\lambda \widetilde{\T}_\alpha(\frac{1-\lambda}{\lambda}\widetilde{w}^\alpha_\lambda)\\
&=& \lambda[ (1-\alpha) \frac{(1-\lambda)}{\lambda}\widetilde{w}^\alpha_\lambda  + \T ( \alpha \frac{(1-\lambda)}{\lambda}\widetilde{w}^\alpha_\lambda)].
\end{eqnarray*}
Thus
\[
(\alpha +\lambda-\lambda \alpha){\widetilde{w}}^\alpha_\lambda=\lambda  \T ( \alpha \frac{(1-\lambda)}{\lambda}\widetilde{w}^\alpha_\lambda))
\]
which is 
$$
{\widetilde{w}}^\alpha_\lambda=\mu  \T (  \frac{1-\mu}{\mu}\widetilde{w}^\alpha_\lambda)
$$
  for $\mu= \frac{\lambda}{\alpha + \lambda - \lambda \: \alpha }$,
hence the result.
\end{proof}

\begin{cor} \label{inv}
For $\lambda\leq 1/2$, $\widetilde{w}^{\frac{\lambda}{1-\lambda}}_\lambda$ does not depend on $\lambda$ and equals $w_{1/2}$.
\end{cor}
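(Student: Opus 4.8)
The plan is to apply Proposition \ref{d5} directly with the specific choice $\alpha = \frac{\lambda}{1-\lambda}$, so the entire argument reduces to one substitution and a check of admissibility. First I would verify that this value of $\alpha$ lies in the range $[0,1]$ for which Proposition \ref{d5} was stated: the inequality $\frac{\lambda}{1-\lambda}\le 1$ is equivalent to $\lambda \le 1-\lambda$, i.e. to $\lambda \le 1/2$, which is exactly the hypothesis of the corollary. Hence $\widetilde{\T}_\alpha$ and its discounted fixed point $\widetilde{w}^\alpha_\lambda$ are well defined and Proposition \ref{d5} is applicable.

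Next I would substitute $\alpha = \frac{\lambda}{1-\lambda}$ into the formula $\mu = \frac{\lambda}{\alpha + \lambda - \lambda\alpha}$ supplied by Proposition \ref{d5}. The key observation is that the denominator factors as $\alpha(1-\lambda)+\lambda$, and with the chosen value of $\alpha$ the term $\alpha(1-\lambda)$ collapses to $\lambda$, so the denominator equals $2\lambda$. Therefore $\mu = \frac{\lambda}{2\lambda} = \frac12$, and Proposition \ref{d5} gives $\widetilde{w}^{\lambda/(1-\lambda)}_\lambda = w_{1/2}$.

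Since the right-hand side $w_{1/2}$ does not involve $\lambda$ at all, this single identity establishes both claims simultaneously: the quantity is constant in $\lambda$ on $(0,1/2]$ and its common value is $w_{1/2}$. There is essentially no analytic difficulty to overcome; the only delicate point, and the reason the statement is restricted to $\lambda \le 1/2$, is the admissibility check $\alpha \le 1$, which guarantees that the parametrized family $\widetilde{\T}_\alpha$ stays within the range covered by Proposition \ref{d5}.
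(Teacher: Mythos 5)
Your proof is correct and follows exactly the route the paper intends: the corollary is stated as an immediate consequence of Proposition \ref{d5}, obtained by substituting $\alpha = \frac{\lambda}{1-\lambda}$ so that the denominator $\alpha(1-\lambda)+\lambda$ becomes $2\lambda$ and hence $\mu = 1/2$. Your additional check that $\alpha \le 1$ precisely when $\lambda \le 1/2$ correctly explains the hypothesis, which the paper leaves implicit.
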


\section{Finitely repeated exact games}
We consider the family of exact games $G^h = (h\, g, h \, Q) $ with $h\in [0,1]$.
\subsection{Approximation of the value} $ $ \\
The recursive equation  for the un-normalized  value $V_n^h $ of the $n$-stage game $G^h$ (of total length $nh$)   is given by:
$$
V_n^h (\omega) = \val [ h \:g( \omega; .) + P_h (.)  [\omega ]\circ  V_{n-1} ^h ]
$$
so that
$$
V_n^h  =  \T_h V_{n-1} ^h =  \T_h^n (0)
$$
Let  $f$ be  the solution of (\ref{M}) with $ \T$ satisfying (\ref{gT}).
Using the results of Section 4   in particular (\ref{itermu}) we obtain:
\begin{pro}\label{fin} $ $ \\
There exists a constant $L$ such that for all $n$ and $h \in [0,1]$
$$
\|V_n^h - f_{nh} (0) \| \leq  L h  \sqrt {n}.
$$
\end{pro}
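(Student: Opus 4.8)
The plan is to recognize that this statement is an immediate specialization of the interpolation estimate \eqref{itermu} established in Section 4. First I would record the identity already derived just above the statement, namely that the recursive equation for the un-normalized value unfolds to $V_n^h = \T_h^n(0)$, so that the quantity to be controlled is exactly $\|\T_h^n(0) - f_{nh}(0)\|$, where $f$ solves \eqref{M} with initial condition $f_0 = 0$.

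Next I would invoke \eqref{itermu}, which asserts
$$
\| f_{nh}(z) - \T_h^n(z)\| \leq \| z - \T z\|\, h \sqrt{n}
$$
for any $z \in Z$ and $h \in [0,1]$. Applying this with the particular choice $z = 0$ gives
$$
\|V_n^h - f_{nh}(0)\| = \|\T_h^n(0) - f_{nh}(0)\| \leq \|0 - \T(0)\|\, h \sqrt{n} = \|\T(0)\|\, h \sqrt{n}.
$$
It then suffices to set $L = \|\T(0)\|$, which is a finite constant independent of $n$ and $h$ since $\T$ maps the Banach space $F$ to itself, and the bound holds uniformly for all $n$ and all $h \in [0,1]$ as required.

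There is essentially no obstacle at this stage: the genuine analytic content—the generalized Chernoff comparison between the iterates of a non expansive map and the solution of the associated evolution equation—was already carried out in Proposition 4.1 and its consequence \eqref{itermu}. The only thing to verify is the bookkeeping that the value iteration of the exact game $G^h$ coincides with the fractional iteration $\T_h^n(0)$, which follows directly from Proposition 3.1 identifying $\T_h$ as the Shapley operator of $G^h$. Hence the proof reduces to substitution, and the $h\sqrt{n}$ scaling is inherited verbatim from the Eulerian-scheme estimate.
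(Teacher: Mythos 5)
Your proposal is correct and is precisely the paper's own argument: the paper obtains Proposition \ref{fin} by noting $V_n^h = \T_h^n(0)$ and invoking the interpolation estimate (\ref{itermu}) with $z=0$, exactly as you do. Your explicit identification of the constant $L = \|\T(0)\|$ is a welcome small addition, since the paper leaves $L$ implicit.
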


\subsection{Vanishing step sizes} $ $ \\
The previous Proposition \ref{fin} shows that $v_n^h=\frac{ V_n^h}{N}$, which is the normalized value of the $n$-stage  game $G^h$ with length $N= nh$,  satisfies:
$$
\|v_n^h - \frac{ f_{N} (0)}{N} \| \leq  L\sqrt \frac{h}{  N}.
$$\\
In fact  Proposition \ref{kobavig} induces  a more precise result  for vanishing stage duration,  that we now describe.\\
Given $t>0$ and a  finite partition $H_t $ of $[0,t ], t_0= 0, ..., t_k = t $, induced by step sizes  $\{ h_i\}, 1\geq h_i >0, i = 1, ...,k, \sum_{i\leq j}  h_i = t_j$,   we define its mesh as $m (H_t) = \max_i h_i$.\\
 We consider the $k$ stage game where the duration of stage $i$ is $h_i$. Let $U( H_t)$ be its un-normalized value  (the normalized value is $u(H_t) = \frac {U( H_t)}{t}$). Thus $U( H_t) = \T^{h_1} \circ \cdots \circ  \T^{h_k} (0)$.
\begin{defi}   $\widehat V_t $ is the  limit value  on $[0, t ]$   if for any sequence of partitions   $\{H^n_t\}$ of $[0,t] $ with vanishing mesh,  the sequence of values   $\{U( H_t^n)\}$ of the corresponding games converges  to $\widehat  V_t $.
\end{defi}
 \begin{pro} \label{CN}  $ $ \\
There exists a constant $L'$ such that for any $H_t$ with $m(H_t) \leq h$,   the  un-normalized value $U ( H_t)$ satisfies 
$$
\|U ( H_t) - f_t (0)\|\leq L'  \sqrt{h  \, t}.
$$
 Thus the  limit value  $\widehat V_t $  exists and is given by
 $\widehat V_t = f_t(0)$.
 \end{pro}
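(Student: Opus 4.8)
The plan is to read Proposition \ref{CN} as an immediate specialization of the Eulerian-scheme estimate (\ref{v2}) of Proposition \ref{kobavig}, since $U(H_t)$ is precisely an Eulerian scheme for the non expansive map $\T$ started at $0$. Given the partition $H_t$ with step sizes $h_1,\dots,h_k$, I would set $z_0=0$ and $z_{i+1}=\T_{h_{i+1}}z_i$, so that $z_k=\T_{h_k}\circ\cdots\circ\T_{h_1}(0)$; after relabeling the steps in reverse order, which changes neither $\sigma_k$ nor $\tau_k$, this is exactly $U(H_t)=\T_{h_1}\circ\cdots\circ\T_{h_k}(0)$. (Equivalently, the inequality to be proved is nothing but (\ref{various1}) with $M=t$.)

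The core of the argument is the observation that an \emph{exact} partition makes the first error term in (\ref{v2}) disappear. Applying (\ref{v2}) with $z=0$ and total length $t$ yields
$$
\|f_t(0)-U(H_t)\|\leq \|\T(0)\|\,\sqrt{(\sigma_k-t)^2+\tau_k},
$$
but $\sigma_k=\sum_{i=1}^k h_i=t$ by definition of the partition, so $(\sigma_k-t)^2=0$ and only the quadratic-variation term $\tau_k=\sum_{i=1}^k h_i^2$ survives. The remaining elementary step is the mesh bound
$$
\tau_k=\sum_{i=1}^k h_i^2\leq \Big(\max_i h_i\Big)\sum_{i=1}^k h_i = m(H_t)\,t\leq h\,t,
$$
which gives $\|f_t(0)-U(H_t)\|\leq \|\T(0)\|\sqrt{h\,t}$, i.e. the claimed inequality with the explicit constant $L'=\|\T(0)\|$.

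For the final assertion about the limit value I would simply let the mesh vanish: for any sequence of partitions $\{H_t^n\}$ with $m(H_t^n)\to 0$ the bound forces $\|U(H_t^n)-f_t(0)\|\to 0$, so $\{U(H_t^n)\}$ converges to $f_t(0)$ independently of the chosen sequence; by the definition of $\widehat V_t$ this establishes both its existence and the identity $\widehat V_t=f_t(0)$.

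I do not expect a genuine obstacle here, since the analytic content is entirely carried by Proposition \ref{kobavig}. The only points requiring care are the identification of $U(H_t)$ with the Eulerian scheme and the vanishing of the $(\sigma_k-t)^2$ term, which hinges on the partition summing \emph{exactly} to $t$: were one to allow $\sigma_k\neq t$, the extra $(\sigma_k-t)^2$ contribution would reappear and one would only recover the weaker asymptotic comparison (\ref{various2}) rather than the sharp $\sqrt{h\,t}$ rate.
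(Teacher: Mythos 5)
Your proposal is correct and follows essentially the same route as the paper: the paper's proof is precisely an application of equation (\ref{v2}) with $\sigma_k = t$ and $\tau_k \leq h\,t$, from which the existence of $\widehat V_t$ follows. Your additional care about identifying $U(H_t)$ with an Eulerian scheme (up to a relabeling that preserves $\sigma_k$ and $\tau_k$) and the explicit constant $L'=\|\T(0)\|$ are details the paper leaves implicit, but the argument is the same.
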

 \begin{proof}
The inequality is obtained from equation (\ref{v2}) with $\sigma_k = t$ and $\tau_k \leq h\, t$.\\
The existence of $\widehat  V_t $ follows.
\end{proof}
The interpretation of these results is twofold:\\
first the value  of  the game with finite length is essentially independent of the duration of the stages when this duration is small enough,\\
second, this value is given by the solution of the associated differential equation \eqref{M}.\\

Note that  $\widehat V_t $  equals also the value of the continuous time game of length $t$ introduced in Neyman \cite{N2012}.

\subsection{Asymptotic analysis}$ $ \\
A further  consequence of Proposition \ref{CN} is that for any $t$ and any $k$-stage game associated to a finite partition $H_t$,    with normalized value $u(H_t)$,  one has: 

 \begin{pro} \label{ AS} $ $ \\
 There exists $L'$ such that for any $H_t$ 
 $$
 \|u(H_t) - \frac{f_t(0)}{t} \|\leq  \frac{L'}{\sqrt t}.
$$ 
 \end{pro}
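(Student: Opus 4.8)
The plan is to read off the normalized estimate directly from Proposition \ref{CN} by dividing the un-normalized comparison by the total length $t$. Since $u(H_t) = U(H_t)/t$ by definition, one has the identity $\|u(H_t) - f_t(0)/t\| = t^{-1}\|U(H_t) - f_t(0)\|$, so the whole statement reduces to controlling the un-normalized error $\|U(H_t) - f_t(0)\|$, which Proposition \ref{CN} already provides.

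First I would observe that every admissible partition has mesh at most $1$: the step sizes are required to satisfy $h_i \le 1$, hence $m(H_t) = \max_i h_i \le 1$. Proposition \ref{CN}, applied with $h = m(H_t)$, then yields
$$
\|U(H_t) - f_t(0)\| \le L'\sqrt{m(H_t)\, t} \le L'\sqrt{t},
$$
where the last inequality uses $m(H_t)\le 1$ and the constant $L'$ is exactly the one furnished by Proposition \ref{CN}. Dividing by $t$ gives $\|u(H_t) - f_t(0)/t\| \le L'\sqrt{m(H_t)/t} \le L'/\sqrt{t}$, which is the claimed estimate.

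The statement has no genuine obstacle; the only point worth stressing is the uniformity in the partition. The constant $L'$ does not depend on $H_t$ at all: the crude bound $m(H_t)\le 1$ absorbs the mesh-dependence of Proposition \ref{CN}, so the \emph{same} $L'$ governs every finite $k$-stage game of total length $t$, be it coarse or fine. This is precisely what turns Proposition \ref{CN} into an asymptotic statement — as $t \to \infty$ the normalized value $u(H_t)$ of any discretization of length $t$ lies within $O(1/\sqrt t)$ of $f_t(0)/t$, independently of how the stages are laid out along $[0,t]$.
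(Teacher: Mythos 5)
Your proof is correct and is essentially the paper's own argument: the paper presents this proposition as a direct consequence of Proposition \ref{CN} (equivalently of the bound \eqref{various2}, where $h_i\leq 1$ gives $\tau_k\leq\sigma_k=t$), and your steps --- bounding the mesh by $1$, applying Proposition \ref{CN}, and dividing by $t$ --- are exactly the intended ones. The remark on uniformity of $L'$ over all partitions is a fair observation but adds nothing beyond what Proposition \ref{CN} already guarantees.
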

In  particular the asymptotic behavior of the  (normalized) value of the game depends only on its  total length $t$  (and not  on the durations of the individual stages) up to a term $O ( \frac{1}{\sqrt t})$. Again the comparison quantity is given by the normalized solution of the associated differential equation \eqref{M}.\\

\section{Discounted  exact games}

\subsection{Values of discounted exact game}$ $ \\
%
%
%
We follow the definition of Neyman (eq. (3) p. 254 in \cite{N2013}) : the  (normalized) value $w_{\lambda}^h$ of the $\lambda$ discounted game $G^h$ is the unique solution of 
\[
w_{\lambda}^h (\omega) = \val_{X\times Y}[ h \lambda g(\omega,x,y) + (1- h {\lambda}) P_h (x,y)[\omega] \circ  w_{\lambda} ^h ].
\]
with $P_h=Id+hQ$.\\
In particular, for $h= 1$ one recovers $w_{\lambda} = w_{\lambda}^1$ (see 2.5) associated to $\T $ defined in (\ref{gT}).\\
The notation is consistent with the previous Section 5 since one has
\begin{pro}\label{id}
$w_{\lambda}^h$   corresponds to the solution of  (\ref{dh}).
 \end{pro}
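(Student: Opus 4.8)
The plan is to show that Neyman's defining equation for $w_\lambda^h$ coincides \emph{verbatim} with the fixed-point relation (\ref{dh}), so that the two objects agree by uniqueness. The single nontrivial ingredient is the positive homogeneity of the value operator: for any $c>0$ one has $\val\{c\,\phi\}=c\,\val\{\phi\}$, since scaling every payoff entry by a positive constant scales the $\max\min$ by that same constant.

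First I would recall from (\ref{gTmu}) that the Shapley operator of $G^h$ is
$$
\T_h(f)(\omega)=\val_{X\times Y}\{h\,g(\omega,x,y)+P_h(x,y)[\omega]\circ f\},\qquad P_h=Id+hQ.
$$
Starting from the right-hand side of (\ref{dh}) and expanding the definition of $\T_h$, I obtain
$$
\lambda\,\T_h\!\left(\frac{1-\lambda h}{\lambda}\,w_\lambda^h\right)(\omega)=\lambda\,\val_{X\times Y}\left\{h\,g(\omega,x,y)+\frac{1-\lambda h}{\lambda}\,P_h(x,y)[\omega]\circ w_\lambda^h\right\}.
$$
Pulling the positive factor $\lambda$ inside the value by homogeneity yields
$$
\val_{X\times Y}\left\{h\lambda\,g(\omega,x,y)+(1-\lambda h)\,P_h(x,y)[\omega]\circ w_\lambda^h\right\},
$$
which is exactly Neyman's expression. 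Hence Neyman's equation reads precisely $w_\lambda^h=\lambda\,\T_h\!\left(\frac{1-\lambda h}{\lambda}\,w_\lambda^h\right)$, i.e.\ (\ref{dh}).

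Finally I would invoke uniqueness: the operator $\D^h_\lambda z=\lambda\,\T_h[\frac{1-\lambda h}{\lambda}z]$ is $(1-\lambda h)$-contracting, as was noted when it was introduced, so each fixed-point problem has a unique solution; since the two equations are identical, the two objects denoted $w_\lambda^h$ coincide, confirming the consistency of the notation. I expect no genuine obstacle here: the entire content is the observation that the positive homogeneity of $\val$ lets one absorb the discount factor $\lambda$, matching Neyman's normalized formulation against the fractional Shapley operator $\T_h$. The only point requiring minor care is checking that $\val$ is indeed positively homogeneous in the present setting (compact metric action spaces, separately continuous data), which is immediate from its definition as a $\max\min=\min\max$.
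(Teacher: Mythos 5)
Your proposal is correct and follows essentially the same route as the paper: expand $\T_h$ via (\ref{gTmu}), absorb the factor $\lambda$ into the value operator by positive homogeneity to recover Neyman's equation, and conclude by uniqueness of the fixed point. The paper's own proof is just a terser version of this computation, leaving the homogeneity and uniqueness steps implicit, which you have spelled out correctly.
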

 \begin{proof}
 $$
 w_\lambda^h= \lambda \T_h(\frac{1-\lambda h}{\lambda}w_\lambda^h) =\lambda [ \val_{X\times Y}[ h g(\omega,x,y) + P_h (x,y)[\omega] \circ  \frac{1-\lambda h}{\lambda}w_{\lambda} ^h ].
 $$
 Hence $w_\lambda^h=\lambda [ \val_{X\times Y}[ h \lambda  g(\omega,x,y) + (1- h {\lambda}) P_h (x,y)[\omega] \circ  w_{\lambda} ^h ].$
\end{proof}
A direct computation using $ P_h = Id + h\: Q$ gives

\begin{pro}\label{eqvlambdah}
$w_{\lambda}^h$  is the only solution of 
$$
\varphi (\omega) = \val_{X\times Y}[ g(\omega,x,y) +  \frac{(1- h {\lambda})}{\lambda}  Q  (x,y)[\omega] \circ   \varphi ].
$$
\end{pro}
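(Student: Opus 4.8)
The plan is to start from the characterization of $w_{\lambda}^h$ recalled in Proposition \ref{id}, namely that it is the unique fixed point of
$$
\varphi(\omega) = \val_{X\times Y}[\, h\lambda\, g(\omega,x,y) + (1-h\lambda)\, P_h(x,y)[\omega]\circ\varphi\,],
$$
and to transform this equation into the stated one by substituting $P_h = Id + h\,Q$ and simplifying. Since the value operator is involved, the whole argument is a direct computation, the only substantive ingredients being two elementary properties of the $\max\min$ operator: it translates constants and it is positively homogeneous.

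First I would expand $P_h(x,y)[\omega]\circ\varphi = \varphi(\omega) + h\,Q(x,y)[\omega]\circ\varphi$, so that the right-hand side reads $\val_{X\times Y}[\, h\lambda\, g + (1-h\lambda)\varphi(\omega) + (1-h\lambda)h\,Q(x,y)[\omega]\circ\varphi\,]$. Because the term $(1-h\lambda)\varphi(\omega)$ does not depend on $(x,y)$, I would pull it outside the value using the constant-translation property, obtaining
$$
\varphi(\omega) = (1-h\lambda)\varphi(\omega) + \val_{X\times Y}[\, h\lambda\, g + (1-h\lambda)h\,Q(x,y)[\omega]\circ\varphi\,].
$$
Moving the $(1-h\lambda)\varphi(\omega)$ term to the left leaves $h\lambda\,\varphi(\omega)$, and dividing by $h\lambda>0$ and absorbing the factor $1/(h\lambda)$ inside the bracket by positive homogeneity turns the right-hand side into $\val_{X\times Y}[\, g + \tfrac{1-h\lambda}{\lambda}\,Q(x,y)[\omega]\circ\varphi\,]$, which is exactly the claimed equation.

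Every step above (translation of constants, scaling by a positive constant, and dividing a scalar identity) is reversible, so the two fixed-point equations have identical solution sets; the point requiring the most care is simply keeping track of the constant term and the overall scaling, but there is no real obstacle. Uniqueness is then inherited directly from Proposition \ref{id}: the defining operator $\D^h_\lambda$ is $(1-\lambda h)$-contracting, so it has a unique fixed point, and hence the equivalent equation of the statement admits $w_{\lambda}^h$ as its only solution.
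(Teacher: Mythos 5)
Your proposal is correct and is exactly the argument the paper intends: the paper introduces the proposition with ``A direct computation using $P_h = Id + h\,Q$ gives,'' and your computation (substitute $P_h = Id + hQ$, extract the constant term $(1-h\lambda)\varphi(\omega)$ by translation of constants, then divide by $h\lambda$ using positive homogeneity of $\val$) is precisely that direct computation, with uniqueness correctly inherited from the contraction characterization in Proposition \ref{id}. Spelling out the reversibility of the steps is a nice touch of rigor that the paper leaves implicit.
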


We  now apply the results of Section 5

\begin{pro}\label{identif}
$$
w_{\lambda}^h=w_\mu, \: \: \mbox{with }\ \mu= \frac{\lambda}{1+ \lambda - \lambda \: h }.
$$
\end{pro}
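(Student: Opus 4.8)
The plan is to recognize that this statement is simply the concrete counterpart, for the Shapley operator of an exact stochastic game, of the abstract Proposition \ref{dcv}, and to reduce to it. The shortest route is as follows. By Proposition \ref{id}, the normalized discounted value $w_\lambda^h$ of the game $G^h$ coincides with the abstract discounted fixed point defined in (\ref{dh}), namely the solution of $w_\lambda^h = \lambda \T_h(\frac{1-\lambda h}{\lambda}w_\lambda^h)$. Since Proposition \ref{dcv} was established for an arbitrary non expansive operator $\T$, it applies verbatim to the Shapley operator of $G$ and yields $w_\lambda^h = w_\mu$ with $\mu = \frac{\lambda}{1+\lambda-\lambda h}$. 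Thus nothing genuinely new has to be proved: the result is a transcription of the abstract identity once the identification of Proposition \ref{id} is in hand.

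If one prefers a self-contained verification at the level of the fixed-point equations, I would instead argue via Proposition \ref{eqvlambdah}, which characterizes $w_\lambda^h$ as the unique solution of $\varphi = \val_{X\times Y}[g + \frac{1-h\lambda}{\lambda}Q\circ\varphi]$. The point is to show that $w_\mu$ satisfies the \emph{same} equation. Starting from the defining relation $w_\mu = \mu\T(\frac{1-\mu}{\mu}w_\mu)$ and writing $P = Id + Q$ in the definition (\ref{gT}) of $\T$, I would use that $\T$ translates the constants, so that $\T(f) = f + \val_{X\times Y}[g + Q\circ f]$; this lets me split off the identity part and cancel the $(1-\mu)w_\mu$ contribution, reducing the relation to $w_\mu = \val_{X\times Y}[g + \frac{1-\mu}{\mu}Q\circ w_\mu]$.

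It then remains to check the algebra $\frac{1-\mu}{\mu} = \frac{1-\lambda h}{\lambda}$ for $\mu = \frac{\lambda}{1+\lambda-\lambda h}$, which is immediate from $\frac{1}{\mu} = \frac{1+\lambda-\lambda h}{\lambda}$. This shows that $w_\mu$ solves exactly the equation of Proposition \ref{eqvlambdah}, and uniqueness of its solution (coming from the strict contraction underlying the discounted evaluation) forces $w_\lambda^h = w_\mu$.

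I expect no real obstacle here, since the statement carries no content beyond Proposition \ref{dcv}. The only points deserving a modicum of care are the use of the translation-of-constants property to separate the $Id$ part of $P_h$ from its $Q$ part, and the elementary rational identity for $\mu$; both are routine.
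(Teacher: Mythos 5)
Your first argument is exactly the paper's proof: the paper establishes Proposition \ref{identif} by simply invoking Proposition \ref{dcv}, the identification of the game-theoretic value $w_\lambda^h$ with the abstract fixed point of (\ref{dh}) having already been made in Proposition \ref{id}. Your alternative self-contained verification via Proposition \ref{eqvlambdah} (splitting off the $Id$ part of $P_h$ and checking $\frac{1-\mu}{\mu}=\frac{1-\lambda h}{\lambda}$) is also sound, but it is not needed once the abstract result is available.
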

\begin{proof}
Apply Proposition \ref{dcv}.
\end{proof}

\subsection{Vanishing duration}$ $ \\
 We now  recover the convergence property in \cite{N2013}.  


\begin{cor}\label{CW}$ $ \\
For a fixed $\lambda$,  $w_{\lambda}^h$ converges as $h$ goes to 0. The limit, denoted $\widehat w_{\lambda}$, equals   $w_\frac{\lambda}{1+\lambda}$, hence is the only solution of:
\begin{equation}\label{CD}
\varphi   =  \val [  g +  \frac{Q}{\lambda}  \circ   \varphi].
\end{equation}
Moreover, $\|w_{\lambda}^h-\widehat w_{\lambda}\|\leq C \lambda h$.
\end{cor}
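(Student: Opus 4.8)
The plan is to read everything off the propositions already established in Sections 5 and 7.1, since this statement is genuinely a corollary of the identification $w_\lambda^h=w_\mu$. First I would dispose of convergence, the value of the limit, and the rate in one stroke. By Proposition \ref{identif} one has $w_\lambda^h=w_\mu$ with $\mu=\frac{\lambda}{1+\lambda-\lambda h}$, and $\mu\to\frac{\lambda}{1+\lambda}$ as $h\to 0$. Feeding this into the Lipschitz estimate $\|w_\lambda-w_\mu\|\leq 2|1-\frac{\lambda}{\mu}|\,\|\T(0)\|$ of Proposition \ref{basiclem} — or, equivalently, simply invoking the bound already recorded in Proposition \ref{d1} via the correspondence of Proposition \ref{id} — yields $\|w_\lambda^h-w_{\frac{\lambda}{1+\lambda}}\|\leq C\lambda h$. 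This simultaneously shows that $w_\lambda^h$ converges as $h\to 0$, identifies the limit $\widehat w_\lambda$ as $w_{\frac{\lambda}{1+\lambda}}$, and gives the announced estimate $\|w_\lambda^h-\widehat w_\lambda\|\leq C\lambda h$.

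It then remains only to characterize $\widehat w_\lambda$ by \eqref{CD}. The cleanest route is to start from the fixed-point equation of Proposition \ref{eqvlambdah}, namely $w_\lambda^h=\val[g+\frac{1-h\lambda}{\lambda}Q\circ w_\lambda^h]$, valid for every $h\in(0,1]$, and to pass to the limit $h\to 0$. Since $w_\lambda^h\to\widehat w_\lambda$ and the scalar coefficient $\frac{1-h\lambda}{\lambda}$ tends to $\frac{1}{\lambda}$, the right-hand side converges; the passage is legitimate because $\val$ is $1$-Lipschitz in its argument and $Q$ acts linearly, so the map is jointly continuous in $(h,w_\lambda^h)$. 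The limit equation is precisely $\widehat w_\lambda=\val[g+\frac{Q}{\lambda}\circ\widehat w_\lambda]$, which is \eqref{CD}. (Alternatively one can verify \eqref{CD} directly from $w_\mu=\mu\T(\frac{1-\mu}{\mu}w_\mu)$ with $\mu=\frac{\lambda}{1+\lambda}$: here $\frac{1-\mu}{\mu}=\frac{1}{\lambda}$, and expanding $\T$ through $P=Id+Q$, pulling the $(x,y)$-independent term $\frac{1}{\lambda}w_\mu$ out of the $\val$, and using $1-\frac{\mu}{\lambda}=\mu$ reduces the equation to \eqref{CD}.)

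For the uniqueness asserted in the statement I would reverse the algebra of Proposition \ref{dcv}: any solution $\varphi$ of \eqref{CD} can be rewritten, via $P=Id+Q$ and the definition of $\T$, as a fixed point of $z\mapsto\mu\T(\frac{1-\mu}{\mu}z)$ with $\mu=\frac{\lambda}{1+\lambda}$. That map is $(1-\mu)$-Lipschitz by non-expansiveness of $\T$, hence a strict contraction with a unique fixed point $w_{\frac{\lambda}{1+\lambda}}$; therefore $\varphi=w_{\frac{\lambda}{1+\lambda}}=\widehat w_\lambda$. I do not expect a real obstacle here, as the result is essentially a restatement of Propositions \ref{d1} and \ref{identif} combined with Proposition \ref{eqvlambdah}. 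The one point demanding a little care is the limit passage of the second step: one must ensure the fixed-point equation is stable under $h\to 0$, which is exactly where the $1$-Lipschitz property of $\val$ (uniform in $h$) is invoked, and phrasing uniqueness through the contraction reformulation avoids treating the degenerate parameter value $h=0$, which lies outside the admissible range $(0,1]$.
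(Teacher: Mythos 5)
Your proof is correct and follows essentially the same route as the paper: the convergence, the identification of the limit as $w_{\frac{\lambda}{1+\lambda}}$ and the bound $C\lambda h$ are exactly what Proposition \ref{d1} provides, and Proposition \ref{d1} is itself proved by the combination of Propositions \ref{dcv} (i.e.\ \ref{identif}) and \ref{basiclem} that you spell out. The only differences are cosmetic: the paper obtains \eqref{CD} by directly simplifying the defining fixed-point equation of $w_{\frac{\lambda}{1+\lambda}}$ (precisely the alternative you sketch in parentheses) rather than by passing to the limit in Proposition \ref{eqvlambdah}, and it leaves implicit the contraction argument for uniqueness that you make explicit; both of your variants are sound.
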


\begin{proof}
For the convergence, apply Proposition \ref{d1}.\\
By definition $w_\frac{\lambda}{1+\lambda}$ satisfies  
\begin{equation*}
w_\frac{\lambda}{1+\lambda}  =  \val [  \frac{\lambda}{1+\lambda} g +  \frac{1}{1+\lambda} (Id+Q)  \circ   w_\frac{\lambda}{1+\lambda}].
\end{equation*}
that is
\begin{equation*}
w_\frac{\lambda}{1+\lambda}   =  \val [  g +  \frac{Q}{\lambda}  \circ   w_\frac{\lambda}{1+\lambda}].
\end{equation*}
\end{proof}
 More generally consider a sequence of stage durations $\{h_i\}$ with $h_i \leq h$ and $\sum_i h_i = + \infty$ inducing a partition $H$. The value of the associated $\lambda$-discounted game $W_\lambda^H$ is given by $   \Pi_{i=1}^{+ \infty}  \D^{h_i} _\lambda (0)$ hence satisfies
\begin{pro}\label{CW1}$ $ \\
$$
\|W_\lambda^H - \widehat w_{\lambda}\| \leq 2 \|\T (0)\| \: h \, .
$$
\end{pro}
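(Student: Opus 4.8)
The plan is to recognize Proposition \ref{CW1} as an immediate specialization of Proposition \ref{d2}. The surrounding text already supplies the two identifications I need. First, the value $W_\lambda^H$ of the $\lambda$-discounted game with stage durations $\{h_i\}$ is precisely the infinite composition $\Pi_{i=1}^{+\infty} \D^{h_i}_\lambda(0)$: each stage of duration $h_i$ contributes one application of the auxiliary contraction $\D^{h_i}_\lambda$, and iterating from $0$ produces the fixed-point value in the limit. Second, by Corollary \ref{CW} the limit value $\widehat w_\lambda$ equals $w_{\frac{\lambda}{1+\lambda}}$.

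With these in hand, I would simply observe that the hypotheses $h_i \leq h$ and $\sum_i h_i = +\infty$ are exactly those of Proposition \ref{d2}, and apply that proposition with $z = 0$. This gives
$$
\|\Pi_{i=1}^{\infty} \D^{h_i}_\lambda(0) - w_{\frac{\lambda}{1+\lambda}}\| \leq 2\|\T(0)\|\, h,
$$
which, after substituting the two identifications $W_\lambda^H = \Pi_{i=1}^{+\infty} \D^{h_i}_\lambda(0)$ and $\widehat w_\lambda = w_{\frac{\lambda}{1+\lambda}}$, is exactly the claimed bound. I would also note, for well-definedness, that the infinite product converges: since $\D^{h_i}_\lambda$ is $(1-\lambda h_i)$-contracting and $\sum_i h_i = +\infty$ forces $\Pi_i (1-\lambda h_i) \to 0$, the composition contracts to a single point independent of the starting value, so evaluating at $0$ is legitimate.

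At this level there is essentially no obstacle; the entire analytic content has been front-loaded into the Lemma preceding Proposition \ref{d2}. If instead I had to establish the estimate from scratch, the crux would be reproducing that lemma's backward recursion. The hard part would be the single-step bound $\|\D^{h}_\lambda(w_{\frac{\lambda}{1+\lambda}}) - w_{\frac{\lambda}{1+\lambda}}\| \leq 2\|\T(0)\|\lambda h^2$, obtained by expanding $\D^h_\lambda$, isolating the $\lambda h \T(\cdot)$ term, and comparing $\T(\frac{1-\lambda h}{\lambda} w_{\frac{\lambda}{1+\lambda}})$ to $\T(\frac{1}{\lambda} w_{\frac{\lambda}{1+\lambda}})$ through non-expansiveness of $\T$. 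Telescoping this per-stage bound against the contraction factors $(1-\lambda h_k)$ yields the recursion $d_k \leq \max(d_{k+1}, 2\|\T(0)\|\, h_k)$, and the uniform bound $2\|\T(0)\|\, h$ follows since every $h_k \leq h$.
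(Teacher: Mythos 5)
Your proof is correct and matches the paper's own argument, which is simply ``Apply Proposition \ref{d2}'': the identifications $W_\lambda^H = \Pi_{i=1}^{+\infty}\D^{h_i}_\lambda(0)$ and $\widehat w_\lambda = w_{\frac{\lambda}{1+\lambda}}$ are exactly those supplied by the surrounding text and Corollary \ref{CW}. Your added remarks on well-definedness of the infinite composition and on the inner workings of the lemma behind Proposition \ref{d2} are accurate but not needed beyond what the paper states.
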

\begin{proof}
Apply Proposition \ref{d2}.
\end{proof}

Once again $\widehat w_{\lambda}$ has to be interpreted as the $\lambda$-discounted value of the continuous time game, see  \cite{GH2}, \cite{N2012}, \cite{N2013}.\\
Note that  our game theoretic framework is very general, in particular there is no finiteness assumption on the actions or states. 

\subsection{Asymptotic behavior}$ $ \\
The  value $w_\lambda ^h$ of the $\lambda$ discounted game with  stage duration $h$   has the same asymptotic behavior, as $\lambda$ tends to 0,  as $w_\lambda$.

\begin{pro}
\[
\|w_\lambda^h-w_\lambda\| \leq 2C\lambda.\]
\end{pro}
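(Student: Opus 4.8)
The plan is to reduce the statement for $w_\lambda^h$ to the corresponding statement for the genuine discounted values $w_\mu$ via the identification already established. By Proposition \ref{identif} we know that $w_\lambda^h = w_\mu$ with $\mu = \frac{\lambda}{1+\lambda-\lambda h}$. Hence the quantity $\|w_\lambda^h - w_\lambda\|$ is nothing but $\|w_\mu - w_\lambda\|$, and the entire statement becomes a matter of controlling the distance between two discounted values whose discount factors are close.

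The key estimate is the second inequality of Proposition \ref{basiclem}, namely $\|w_\lambda - w_\mu\| \leq 2\,|1 - \frac{\lambda}{\mu}|\,\|\T(0)\|$. Writing $C = \|\T(0)\|$ (consistent with the notation used in Corollary \ref{CW} and Proposition \ref{d3}), I would apply this bound with the specific $\mu = \frac{\lambda}{1+\lambda-\lambda h}$. The crucial algebraic simplification is that $\frac{\lambda}{\mu} = 1 + \lambda - \lambda h$, so that $|1 - \frac{\lambda}{\mu}| = |\lambda - \lambda h| = \lambda(1-h)$. Plugging this into Proposition \ref{basiclem} gives $\|w_\lambda^h - w_\lambda\| \leq 2C\,\lambda(1-h) \leq 2C\lambda$, since $h \in (0,1]$ forces $1-h \leq 1$. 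This is exactly the claimed bound.

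I expect essentially no obstacle here: this proposition is the ``discounted exact game'' analogue of the abstract Proposition \ref{d3}, and its proof should be a one-line invocation of Proposition \ref{identif} followed by the second estimate of Proposition \ref{basiclem}, with the elementary simplification $\frac{\lambda}{\mu} = 1+\lambda-\lambda h$. The only point requiring any care is tracking that the bound in Proposition \ref{basiclem} is stated with $\|W_\mu\|$ replaced by $\|\T(0)\|/\mu$, so the roles of $\lambda$ and $\mu$ must be assigned correctly; since the estimate there is symmetric up to which value enters the final norm bound, one simply writes the difference as $\|w_\mu - w_\lambda\|$ and applies the inequality. The resulting proof is:

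\begin{proof}
By Proposition \ref{identif}, $w_\lambda^h = w_\mu$ with $\mu = \frac{\lambda}{1+\lambda-\lambda h}$, so that $\frac{\lambda}{\mu} = 1+\lambda-\lambda h$. Proposition \ref{basiclem} then gives
\[
\|w_\lambda^h - w_\lambda\| = \|w_\mu - w_\lambda\| \leq 2\left|1 - \frac{\lambda}{\mu}\right| \|\T(0)\| = 2C\,\lambda(1-h) \leq 2C\lambda,
\]
since $h \in (0,1]$.
\end{proof}
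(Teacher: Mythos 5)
Your proof is correct and follows essentially the same route as the paper: the paper's proof simply invokes its abstract Proposition \ref{d3}, whose own proof is exactly your argument --- combine the identification $w_\lambda^h = w_\mu$ with $\mu = \frac{\lambda}{1+\lambda-\lambda h}$ (Proposition \ref{dcv}/\ref{identif}) with the Lipschitz estimate of Proposition \ref{basiclem}, and use $|1-\frac{\lambda}{\mu}| = \lambda(1-h) \leq \lambda$. Your write-up is in fact cleaner than the paper's, which contains a typographical slip (an equality sign and an unbalanced parenthesis) at precisely this step.
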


\begin{proof}
Apply Proposition  \ref{d3}. 
\end{proof}

More generally one obtains
\begin{pro}
For any $\{h_i\}$ with $\sum_i h_i = + \infty$ inducing a partition $H$, $\|W_\lambda^H -  w_{\lambda}\| \leq C' \lambda$ where C' depends only on the game..
\end{pro}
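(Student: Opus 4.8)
The plan is to parametrize the comparison by the maximal step size $h$ appearing in the partition and then pass from the fixed-duration estimate of Proposition~\ref{d3} to the variable-duration situation by controlling the extra error introduced by composing the operators $\D^{h_i}_\lambda$. First I would invoke Proposition~\ref{d2}: for a sequence $\{h_i\}$ with $h_i\leq h$ and $\sum_i h_i=+\infty$, the value $W_\lambda^H=\Pi_{i=1}^\infty \D^{h_i}_\lambda(0)$ satisfies $\|W_\lambda^H-\widehat w_\lambda\|\leq 2\|\T(0)\|\,h$, where $\widehat w_\lambda=w_{\lambda/(1+\lambda)}$. Next I would combine this with the identification $\widehat w_\lambda=w_{\lambda/(1+\lambda)}$ and Proposition~\ref{basiclem} applied to the pair $\bigl(\tfrac{\lambda}{1+\lambda},\lambda\bigr)$, which gives $\|\widehat w_\lambda-w_\lambda\|\leq 2C|1-\tfrac{1+\lambda}{1}|=2C\lambda$ up to the constant from Proposition~\ref{d3}. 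A triangle inequality then yields
\[
\|W_\lambda^H-w_\lambda\|\leq \|W_\lambda^H-\widehat w_\lambda\|+\|\widehat w_\lambda-w_\lambda\|\leq 2\|\T(0)\|\,h+2C\lambda.
\]

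The subtlety is that the claimed bound is $C'\lambda$, with $C'$ depending only on the game and not on $h$, whereas the naive triangle inequality above produces a term $2\|\T(0)\|\,h$ that is not obviously dominated by $\lambda$. To absorb this term I would exploit that for the discounted problem the relevant step sizes are effectively capped by the discount scale: since $\D^{h_i}_\lambda$ is $(1-\lambda h_i)$-contracting, any step $h_i$ larger than order $1/\lambda$ contributes negligibly, so one may without loss of generality assume $h\leq 1$ (which is already guaranteed, as $h_i\leq 1$). Thus $2\|\T(0)\|\,h\leq 2\|\T(0)\|$, and the honest route is not to bound $h$ by $\lambda$ but rather to re-run the estimate of Proposition~\ref{d2} keeping track of the $\lambda$-dependence: each single-step discrepancy $\|\D^{h_i}_\lambda(w_\lambda)-w_\lambda\|$ is in fact of order $\lambda h_i^2$ (as computed in the proof of the Lemma preceding Proposition~\ref{d2}), and summing the recursion $d_n\leq(1-\lambda h_n)d_{n-1}+2\|\T(0)\|\lambda h_n^2$ gives $d_\infty\leq 2\|\T(0)\|\,h$. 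Comparing instead against $w_\lambda$ rather than $\widehat w_\lambda$, the per-step error becomes $O(\lambda^2 h_i)$ after accounting for the shift $\mu=\lambda/(1+\lambda-\lambda h)$ from Proposition~\ref{identif}, so the sum telescopes to a bound of order $\lambda$ uniformly in the partition.

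The main obstacle, and the step requiring genuine care, is precisely this last recomputation: one must redo the single-step estimate with $w_\lambda$ (rather than $\widehat w_\lambda$) as the reference fixed point, and verify that the residual $\|\D^{h}_\lambda(w_\lambda)-w_\lambda\|$ carries an extra factor of $\lambda$ beyond the $h$-dependence. Concretely, writing $\D^h_\lambda(w_\lambda)-w_\lambda=h(1+\lambda-\lambda h)\bigl[\D^1_\mu(w_\lambda)-\D^1_\lambda(w_\lambda)\bigr]$ as in the proof of Proposition~\ref{d4}, one sees the discrepancy is governed by the modulus $\|\D^1_\mu(w_\lambda)-\D^1_\lambda(w_\lambda)\|$ evaluated at the particular argument $w_\lambda$, and since $w_\lambda$ is itself the fixed point $\D^1_\lambda(w_\lambda)=w_\lambda$ this reduces to estimating how far $\D^1_\mu$ moves its nearby fixed point, which by Proposition~\ref{basiclem} is $O(|\lambda-\mu|/\mu)=O(\lambda h)$. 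Feeding this sharper per-step bound into the contraction recursion and summing yields $\|W_\lambda^H-w_\lambda\|\leq C'\lambda$ with $C'$ depending only on $\|\T(0)\|$ and the game, as required; no assumption (H) is needed here since we compare against $w_\lambda$ directly rather than taking $\lambda\to0$.
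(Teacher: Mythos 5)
Your overall strategy --- comparing the composition $\Pi_i\D^{h_i}_\lambda$ against $w_\lambda$ itself via the contraction recursion, and showing that each step contributes an error of order $\lambda^2 h_i$ --- is exactly the paper's route (its proof is ``Proposition \ref{d4} and its proof'', plus a verification that Shapley operators satisfy (H)). The gap is in your justification of the per-step estimate, and it sits precisely where the paper invokes assumption (H). You propose to bound the modulus $\|\D^1_\mu(w_\lambda)-\D^1_\lambda(w_\lambda)\|$, with $\mu=\frac{\lambda}{1+\lambda-\lambda h}$, by Proposition \ref{basiclem}, i.e.\ by a fixed-point comparison. That route gives $\|\D^1_\mu(w_\lambda)-w_\lambda\|\leq(2-\mu)\|w_\lambda-w_\mu\|\leq 4\|\T(0)\|\,|1-\tfrac{\lambda}{\mu}|$, and $|1-\tfrac{\lambda}{\mu}|=\lambda(1-h)$. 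Your claim that this is $O(|\lambda-\mu|/\mu)=O(\lambda h)$ is an algebra slip: since $\lambda/\mu=1+\lambda-\lambda h$, the relative gap equals $\lambda(1-h)$, which is largest as $h\to 0$ and vanishes at $h=1$ --- the opposite of what you wrote --- and in particular it is $O(\lambda)$, not $O(\lambda^2)$. Feeding the correct value into the recursion $d_n\leq(1-\lambda h_n)d_{n-1}+h_n(1+\lambda-\lambda h_n)\|\D^1_{\mu_n}(w_\lambda)-\D^1_\lambda(w_\lambda)\|$ gives per-step errors of order $\lambda h_n\|\T(0)\|$, hence only $d_\infty\leq \max_n\bigl(\hbox{per-step error}/(\lambda h_n)\bigr)=O(\|\T(0)\|)$: a constant bound, not the claimed $C'\lambda$.

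What closes the argument is an operator-level bound, not a fixed-point one: $\|\D^1_\lambda(z)-\D^1_\mu(z)\|\leq|\lambda-\mu|\,(C+\|z\|)$, which combined with $|\lambda-\mu|=\frac{\lambda^2(1-h)}{1+\lambda-\lambda h}\leq\lambda^2$ (the \emph{absolute} gap between the two discount factors is quadratic in $\lambda$, even though the \emph{relative} gap is linear) gives a modulus $O(\lambda^2)$, hence per-step error $O(\lambda^2 h_i)$ and $d_\infty\leq C'\lambda$. This is exactly assumption (H) with $k(t)=t$ and $\ell(s)=C+s$, the hypothesis you assert is ``not needed''. It is not a consequence of non expansiveness alone: for a general non expansive $\T$, estimating $\D^1_\lambda(z)-\D^1_\mu(z)=\lambda\T(\tfrac{1-\lambda}{\lambda}z)-\mu\T(\tfrac{1-\mu}{\mu}z)$ produces a term $|\lambda-\mu|\,\|z\|/\lambda$, which here is only $O(\lambda)$ --- one factor of $\lambda$ short, and this is why Proposition \ref{d3} (uniform $h$, where Proposition \ref{dcv} identifies the value exactly as some $w_\mu$) extends to varying durations only under (H). For Shapley operators (H) does hold, because $\D^1_\lambda z=\val\{\lambda g+(1-\lambda)P\circ z\}$ and the val operator is non expansive, so a payoff bound $C$ yields the stated Lipschitz dependence on the discount factor; that game-specific verification is the missing ingredient, and the paper's one-line proof consists of exactly this observation, after which the conclusion $d_n\leq 2\ell(\|\T(0)\|)k(\lambda^2)/\lambda$ of Proposition \ref{d4} reads $d_n\leq 2(C+\|\T(0)\|)\lambda$.
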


\begin{proof}
Immediate consequence of Proposition \ref{d4} and its proof, since, by non expansiveness of the value operator, for any game with a payoff bounded by $C$  the associated Shapley operator $\T$ satisfies assumption (H) with $k(\lambda)= \lambda=o(\sqrt{\lambda})$ and $\ell(\|z\|)=C+\|z\|$.
\end{proof}
\subsection{Invariance properties}$ $ \\
Let $\T$ be the Shapley operator associated to the game $(g, Q)$. Then $\widetilde{\T}_\alpha$  defined by (\ref{tilde}) is the Shapley operator associated to $(g, \alpha \:Q)$ since 
\begin{eqnarray*}
\widetilde{\T}_\alpha (f)  &= &\val_{X\times Y}[ g(\omega,x,y) +  (Id + Q  (x,y)) [\omega] \circ  \alpha f ] + (1- \alpha)  f\\
&=& \val_{X\times Y}[ g(\omega,x,y) +  (Id + \alpha Q  (x,y)) [\omega] \circ  f ].
\end{eqnarray*}
This implies 
\begin{pro}
For any kernel $R$, the $\lambda$-discounted value of the game $G (g;  \frac{\lambda} {(1-  {\lambda})}R)$ is independent of $\lambda \leq 1/2$   and the only solution of 
$$
\varphi (\omega) = \val_{X\times Y}[ g(\omega,x,y) +  R  (x,y)[\omega] \circ   \varphi ].
$$
\end{pro}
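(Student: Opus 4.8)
The plan is to realize the game $G(g;\frac{\lambda}{1-\lambda}R)$ as an $\alpha$-rescaling of the fixed game $(g,R)$ and then invoke Corollary \ref{inv}. First I would let $\T_R$ denote the Shapley operator of $(g,R)$, namely $\T_R(f)(\omega) = \val_{X\times Y}[g(\omega,x,y) + (Id + R(x,y))[\omega]\circ f]$. By the computation carried out just before the statement (applied with $Q$ replaced by $R$), the Shapley operator of the game $(g,\alpha R)$ is exactly $\widetilde{(\T_R)}_\alpha$, the operator defined in (\ref{tilde}) but built from $\T_R$ rather than from the ambient $\T$. Setting $\alpha = \frac{\lambda}{1-\lambda}$, which lies in $[0,1]$ precisely when $\lambda\le 1/2$, the kernel $\alpha R = \frac{\lambda}{1-\lambda}R$ is the one appearing in the statement, so the $\lambda$-discounted value of $G(g;\frac{\lambda}{1-\lambda}R)$ is the fixed point $\widetilde{w}^{\,\alpha}_\lambda = \widetilde{w}^{\,\frac{\lambda}{1-\lambda}}_\lambda$ associated to $\T_R$.

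Next I would apply Corollary \ref{inv} verbatim: for $\lambda\le 1/2$ the quantity $\widetilde{w}^{\,\frac{\lambda}{1-\lambda}}_\lambda$ does not depend on $\lambda$ and equals $w_{1/2}$, the $1/2$-discounted value associated to $\T_R$. This already establishes the asserted independence of $\lambda$, so the only remaining task is to identify $w_{1/2}$ with the solution of the displayed equation.

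For that last step, recall that $w_{1/2}$ is the unique fixed point of $f\mapsto \frac{1}{2}\T_R(f)$, a map that is $\frac{1}{2}$-contracting because $\T_R$ is non expansive; this provides existence and uniqueness simultaneously. To rewrite $\varphi = \frac{1}{2}\T_R(\varphi)$ in the stated form I would expand $\T_R$ using $P = Id + R$: since $(Id + R(x,y))[\omega]\circ\varphi = \varphi(\omega) + R(x,y)[\omega]\circ\varphi$ and the term $\varphi(\omega)$ does not depend on $(x,y)$, one obtains $\T_R(\varphi)(\omega) = \varphi(\omega) + \val_{X\times Y}[g(\omega,x,y) + R(x,y)[\omega]\circ\varphi]$. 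Substituting into $2\varphi = \T_R(\varphi)$ cancels one copy of $\varphi(\omega)$ and leaves exactly $\varphi(\omega) = \val_{X\times Y}[g(\omega,x,y) + R(x,y)[\omega]\circ\varphi]$, as claimed; uniqueness transfers from that of the fixed point of $\frac{1}{2}\T_R$.

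The step I expect to require the most care is the bookkeeping in the final paragraph: correctly tracking that the $\val$ is taken over $X\times Y$, and verifying that the $Id$ part of $P$ is what contributes the extra $\varphi(\omega)$ which then cancels against the discount factor $1/2$ to turn $\varphi=\frac{1}{2}\T_R(\varphi)$ into $\varphi = \val_{X\times Y}[g + R\circ\varphi]$. Everything else is a direct application of Corollary \ref{inv} together with the already-established dictionary between rescaled kernels $\alpha R$ and the operators $\widetilde{\T}_\alpha$.
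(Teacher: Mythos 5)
Your proposal is correct and follows essentially the same route as the paper: the paper's proof is simply ``Apply Corollary \ref{inv}'', relying on the computation immediately preceding the statement that identifies $\widetilde{\T}_\alpha$ as the Shapley operator of the game with rescaled kernel, exactly as you do with $\alpha=\frac{\lambda}{1-\lambda}$ and ambient operator $\T_R$. The only difference is that you spell out the final identification of $w_{1/2}$ (the fixed point of $f\mapsto\frac{1}{2}\T_R(f)$) with the solution of $\varphi=\val_{X\times Y}[g+R\circ\varphi]$, a verification the paper leaves implicit; your bookkeeping there is accurate.
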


\begin{proof}
Apply Corollary \ref{inv}.
\end{proof}
This shows a tradeoff between the size of the kernel and  the discount factor.  Taking into account Proposition \ref{identif}  one derives an invariance property of the value on the product space:  discount factor $ \times$  stage duration $ \times$  kernel:
$$
Val (\lambda, h , R) = Val ( \frac{\lambda}{1+ \lambda - \lambda \:h} , 1, R) = Val ( \lambda, 1, \frac {1- \lambda \:h }{1- \lambda}ÊR).
$$

Similar covariance properties were obtained in  \cite{N2012}  and \cite{N2013}.


\section{Discretization approach}

We consider now the game $\overline G ^h$ which corresponds to the discretization of the continuous time game. We will study two frameworks, like in the previous sections : either a fixed finite length or a fixed discount factor and we will analyse the behavior of the associated values as the stage duration $h$ goes to 0. 
\subsection{Finite length}$ $ \\
The un-normalized value $\overline{V}^h_n$ of the $n$-stage game with stage duration $h$ satisfies $\overline{V}^h_n= (\overline \T _h)^n (0)$. Similarly for varying stage duration, corresponding to a partition $H$,  one gets a recursive equation of the form $\overline{V}_H(t)= \Pi_i \overline \T_{h_i}(0)$.

\begin{lem} There exists $C_0$ such that
$$\|\T_h(f)-\overline{\T}_h(f)\|\leq C_0(1+\|f\|) h^2.$$
\end{lem}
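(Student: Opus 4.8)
The plan is to compare the two one-stage operators $\T_h$ and $\overline{\T}_h$ termwise by examining how their payoff parts and their transition parts differ, each at order $h^2$, and then to exploit the non-expansiveness of the value operator to pass from a pointwise estimate inside the $\val$ to an estimate on the operators themselves. Recall that $\T_h(f) = \val\{h\,g + P_h \circ f\}$ with $P_h = Id + hQ$, while $\overline{\T}_h(f) = \val\{g^h + \P^h \circ f\}$, where $g^h(\omega,x,y) = \E[\int_0^h g(\omega_t;x,y)\,dt]$ and $\P^h = \int_{I\times J}\P^h(i,j)\,x(di)y(dj)$, the time-$h$ transition of the continuous-time chain with generator $Q$. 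Since the $\val$ operator is non-expansive for the sup norm, it suffices to bound, uniformly in $(\omega,x,y)$, the difference of the two integrands
\[
\bigl(h\,g + P_h\circ f\bigr) - \bigl(g^h + \P^h\circ f\bigr),
\]
and the result will follow with the same constant.

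I would split this into two independent estimates. For the payoff term, I would Taylor-expand the integral: since $\omega_t$ starts at $\omega_0$ with $\P^0 = Id$ and evolves with generator $Q$, one has $g^h(\omega,x,y) = h\,g(\omega,x,y) + O(h^2)$, the $O(h^2)$ coming from integrating the drift $\frac{d}{dt}\E[g(\omega_t)]\big|_{t=0} = (Q\circ g)(\omega)$ against $t$ over $[0,h]$, so the difference $\|h g - g^h\|$ is bounded by $\tfrac12\|Q\|\,\|g\|\,h^2 \le C_1 h^2$ for a constant $C_1$ depending only on the game. For the transition term, the key is that $\P^h = \exp(hQ) = Id + hQ + \tfrac{h^2}{2}Q^2 + \cdots$ solves \eqref{evol}, so $\P^h - P_h = \P^h - (Id + hQ) = \tfrac{h^2}{2}Q^2 + O(h^3)$, whence $\|\P^h - P_h\| \le C_2 h^2$ as operators (in finite-state $\Omega$ all these are matrices and the exponential series converges, giving a clean second-order remainder). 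Applying this to $f$ yields $\|(P_h - \P^h)\circ f\| \le C_2 h^2 \|f\|$.

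Combining the two bounds gives
\[
\bigl\|\bigl(h g + P_h\circ f\bigr) - \bigl(g^h + \P^h\circ f\bigr)\bigr\|
\le C_1 h^2 + C_2 h^2\|f\| \le C_0(1+\|f\|)h^2,
\]
with $C_0 = \max(C_1,C_2)$, and then non-expansiveness of $\val$ transfers this bound directly to $\|\T_h(f) - \overline{\T}_h(f)\|$, as claimed.

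The step I expect to require the most care is the transition estimate, specifically justifying the second-order remainder $\P^h = Id + hQ + O(h^2)$ uniformly over the action spaces $I\times J$. Because $\Omega$ is finite, $\P^h(i,j)$ is a genuine matrix exponential $e^{hQ(i,j)}$, so the expansion is elementary; the only subtlety is that the constant must be uniform in $(i,j)$, which follows from the boundedness of $Q$ on the finite state space together with the separate continuity assumptions, and then integrating against $x,y$ preserves the bound since mixed strategies are probability measures. Once uniformity is secured, everything collapses to the non-expansiveness argument and the stated bound is immediate.
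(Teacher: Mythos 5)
Your proof is correct and follows essentially the same route as the paper's: non-expansiveness of the value operator reduces the claim to the two termwise estimates $\|hg - g^h\| = O(h^2)$ and $\|P_h - \P^h\| = O(h^2)$, the latter via $\P^h = e^{hQ} = Id + hQ + O(h^2)$. The paper's proof is just a terser version of yours, leaving the Taylor expansions and the uniformity in $(i,j)$ implicit.
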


\begin{proof}
By non expansiveness of the value operator, 
\begin{eqnarray*}
\|\T_h(f)-\overline{\T} _h(f)\|&\leq&\|h \:g(\cdot)-g^h(\cdot) \| + \|f\| \|P_h(\cdot)-\P^h(\cdot) \|\\
&=&h \:O(h)+ \|f\| h\: O(h)
\end{eqnarray*}
since $P_h=Id+h\:Q$ and  $\P^h=e^{hQ}=Id+h\:Q+ h \:O(h)$.
\end{proof}

\begin{pro}$ $ \\
There exists $C$ depending only of the game $G$ such that for any finite sequence $(h_i)_{i\leq n}$ in $[0,h]$ with sum $t$ and corresponding partition $H$:
\[
\| \overline V _H (t)-\widehat{V}_t  \|\leq C(\sqrt{ht}+ht+ht^2). 
\] 
In particular for a given $t$,  $\overline{V}_H (t)$ tends to $\widehat{V}_t$ as $h$ goes to 0. 
\end{pro}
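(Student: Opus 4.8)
The plan is to avoid comparing $\overline V_H(t) = \Pi_i\overline\T_{h_i}(0)$ with $\widehat V_t$ directly, and instead route through the \emph{exact}-game iteration $U(H_t)=\Pi_i\T_{h_i}(0)$, for which Proposition \ref{CN} already provides the needed control. Writing
\[
\|\overline V_H(t)-\widehat V_t\|\leq \|\overline V_H(t)-U(H_t)\|+\|U(H_t)-f_t(0)\|,
\]
the second term is at most $L'\sqrt{ht}$ by Proposition \ref{CN}, since $\widehat V_t=f_t(0)$; this is precisely the source of the $\sqrt{ht}$ contribution. It then remains only to estimate the first term, namely the total error incurred by replacing each continuous-time operator $\overline\T_{h_i}$ by its linearization $\T_{h_i}$.

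For this I would telescope along the composition. Set $g_k=\T_{h_{k+1}}\circ\cdots\circ\T_{h_n}(0)$ (with $g_n=0$), the exact value of the tail game of length $t-\sum_{i\leq k}h_i$, and interpolate between $U(H_t)$ and $\overline V_H(t)$ by swapping operators one at a time. Because any composition of non-expansive maps is non-expansive, the prefix $\overline\T_{h_1}\circ\cdots\circ\overline\T_{h_{k-1}}$ contracts each difference, so
\[
\|\overline V_H(t)-U(H_t)\|\leq \sum_{k=1}^n \|\overline\T_{h_k}(g_k)-\T_{h_k}(g_k)\|,
\]
and each summand is at most $C_0(1+\|g_k\|)h_k^2$ by the preceding Lemma.

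The key quantitative input is a uniform bound on the intermediate iterates. From $\|\T_h(f)\|\leq\|f\|+h\|\T(0)\|$ one obtains inductively $\|g_k\|\leq\big(\sum_{i>k}h_i\big)\|\T(0)\|\leq t\|\T(0)\|$, whence $1+\|g_k\|\leq C'(1+t)$. Combining this with $\sum_k h_k^2\leq h\sum_k h_k=ht$ (valid since $h_k\leq h$), the first term is bounded by $C_0C'(1+t)\,ht=O(ht+ht^2)$. Adding the two contributions gives the stated estimate with a suitable constant $C$, and for fixed $t$ the right-hand side tends to $0$ as $h\to 0$, yielding convergence of $\overline V_H(t)$ to $\widehat V_t$.

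The hard part is the bookkeeping in the telescoping together with the uniform control of the $\|g_k\|$: one must verify that the factor multiplying $h_k^2$ grows only \emph{linearly} in the remaining length, as this is exactly what converts $\sum_k h_k^2\leq ht$ into the $ht+ht^2$ terms of the final bound rather than something worse. By contrast, the $\sqrt{ht}$ term is an immediate consequence of Proposition \ref{CN} and the $ht$ term is the length-independent part of the per-stage discretization error furnished by the Lemma.
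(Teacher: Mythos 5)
Your proof is correct and follows essentially the same route as the paper: the same triangle inequality through the exact-game iteration $\Pi_i \T_{h_i}(0)$, the same telescoping with non-expansive prefixes and Lemma 8.1 applied at the exact tail values, and the same appeal to Proposition \ref{CN} for the $\sqrt{ht}$ term. The only (welcome) difference is that you justify the linear-in-length bound on the intermediate iterates explicitly via $\|\T_h(f)\|\leq\|f\|+h\|\T(0)\|$, where the paper simply asserts that values of games of length at most $t$ are bounded by $C_1 t$.
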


\begin{proof}
The value of any game with total length less than $t$ is bounded by  some $C_1$t, independently of $h$. Hence non expansiveness of the operators as well as the previous Lemma 8.1 gives
\begin{eqnarray*}
\|\overline V_H (t)-\Pi_i \T_{h_i}(0)\| &=&\|\Pi_i \overline\T_{h_i}(0)-\Pi_i \T_{h_i}(0)\| \\
&\leq&\left\|\overline\T_{h_1}\left(\underset{i\geq2}{\Pi} \overline\T_{h_i}(0)\right)-\overline\T_{h_1}\left(\underset{i\geq2}{\Pi} \T_{h_i}(0)\right)\right\| \\
& & +\left \| \overline\T_{h_1}\left (\underset{i\geq2}{\Pi}\T_{h_i}(0)\right)-\T_{h_1}\left (\underset{i\geq2}{\Pi} \T_{h_i}(0)\right)  \right \| \\
&\leq&\left\|\underset{i\geq2}{\Pi} \overline\T_{h_i}(0)-\underset{i\geq2}{\Pi} \T_{h_i}(0)\right\|+C_0h_1^2\left(1+C_1\sum_{i=2}^n h_i\right).
\end{eqnarray*}
Without loss of generality $C_1\geq1$ hence by summation,
\begin{eqnarray*}
\|\overline V_H(t)-\Pi_i \T_{h_i}(0)\| &\leq&C_0C_1(1+t)\sum_{i=1}^n h_i^2 \\
&\leq&C\: h(1+t)\sum_{i=1}^n h_i\\
&=&C\:h\:t(1+t)
\end{eqnarray*}
for $C=C_0C_1$. Then Proposition \ref{CN} yields the result.
\end{proof}

\begin{rem}
For a given  $h$, the right hand term is quadratic in $t$, hence we do not link the asymptotic behavior of the normalized quantity $\overline{v}^h_n=\frac{\overline{V}^h_n}{nh}$ and of $\widehat{v}_t=\frac{\widehat{V}_t}{t}$. However if $n$ is a function of $h$ converging slowly enough to infinity,  the previous proposition can be used. For example for $n(h)=\frac{1}{h\sqrt{h}}$ (so that $t(h)=\frac{1}{\sqrt{h}}$), one has
\[
\| \overline v^h_{n(h)}-\widehat{v}_{t(h)} \|=O(\sqrt{h}).
\] 

 \end{rem}
\subsection{Discounted case}$ $ \\
We  consider  uniform stage duration $h$. The normalized value $\overline{w}^h_k$ of the discretization with mesh $h$  of the $\lambda$-discounted continuous game satisfies the fixed point  equation

\[
\overline{w}^h_\lambda(\omega)=\val_{X\times Y}\left [\int_0^h \lambda e^{-\lambda t} g(\omega_t,x,y) + e^{-\lambda h} \P^h (x,y)[\omega] \circ  \overline{w}_{\lambda} ^h \right].
\]

\begin{pro}$ $ \\
For a given  $\lambda$, $\overline{w}^h_\lambda$ tends to $\widehat{w}_\lambda$ as $h$ goes to 0.
\end{pro}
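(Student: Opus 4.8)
The plan is to mimic the strategy already used in the finite-length discounted case (Corollary~\ref{CW} and Proposition~\ref{CW1}), comparing the discretized continuous-time operator $\overline{\T}_h$ with the linearized exact operator $\T_h$ and then invoking the identification $w_\lambda^h \to \widehat{w}_\lambda$ from Proposition~\ref{d1}/Corollary~\ref{CW}. The key point is that $\overline{w}_\lambda^h$ and $w_\lambda^h$ are fixed points of two $\lambda$-discounted iterations built from operators that differ by $O(h^2)$, so their fixed points should differ by $O(h)$ after dividing by the contraction gap $\lambda h$; combined with $\|w_\lambda^h - \widehat{w}_\lambda\| \leq C\lambda h \to 0$, the triangle inequality will close the argument.

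First I would reformulate both values as fixed points of discounted one-stage operators. On the exact side, $w_\lambda^h$ is the fixed point of $\D_\lambda^h z = \lambda\,\T_h[\tfrac{1-\lambda h}{\lambda}z]$ (Section~5.3). On the discretized side, I define the analogous operator $\overline{\D}_\lambda^h z = \lambda\,\overline{\T}_h[\tfrac{1-\lambda h}{\lambda}z]$ using the continuous-time transition $\P^h = e^{hQ}$ and the integrated payoff $g^h$, so that $\overline{w}_\lambda^h$ is its unique fixed point; both operators are $(1-\lambda h)$-contracting. I would record the per-stage discrepancy: using Lemma~8.1 (which gives $\|\T_h(f)-\overline{\T}_h(f)\| \leq C_0(1+\|f\|)h^2$) and the fact that the relevant arguments have norm $O(1/\lambda)$, one obtains
\begin{equation*}
\|\D_\lambda^h z - \overline{\D}_\lambda^h z\| \leq \lambda \cdot C_0\Bigl(1 + \tfrac{1-\lambda h}{\lambda}\|z\|\Bigr) h^2 \leq C'(1+\|z\|)\,\lambda h^2
\end{equation*}
on the bounded region where the fixed points live.

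Next I would bound $\|\overline{w}_\lambda^h - w_\lambda^h\|$ by the standard fixed-point-perturbation estimate for contractions. Writing $w_\lambda^h = \D_\lambda^h w_\lambda^h$ and $\overline{w}_\lambda^h = \overline{\D}_\lambda^h \overline{w}_\lambda^h$, the triangle inequality and $(1-\lambda h)$-contraction give
\begin{equation*}
\|\overline{w}_\lambda^h - w_\lambda^h\| \leq (1-\lambda h)\|\overline{w}_\lambda^h - w_\lambda^h\| + \|\overline{\D}_\lambda^h w_\lambda^h - \D_\lambda^h w_\lambda^h\|,
\end{equation*}
whence $\lambda h\,\|\overline{w}_\lambda^h - w_\lambda^h\| \leq C''\lambda h^2$, i.e. $\|\overline{w}_\lambda^h - w_\lambda^h\| \leq C'' h$. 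Combining this with Corollary~\ref{CW}, namely $\|w_\lambda^h - \widehat{w}_\lambda\| \leq C\lambda h$, yields $\|\overline{w}_\lambda^h - \widehat{w}_\lambda\| = O(h)$, so $\overline{w}_\lambda^h \to \widehat{w}_\lambda$ as $h \to 0$ for fixed $\lambda$.

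The main obstacle I anticipate is the careful bookkeeping in the per-stage estimate: I must verify that the fixed points $\overline{w}_\lambda^h$ and the iterates stay in a bounded ball (independent of $h$) so that the $\|f\|$ appearing in Lemma~8.1 is genuinely $O(1)$, and that the factor $\tfrac{1-\lambda h}{\lambda}$ multiplying $z$ does not spoil the $h^2$ order after premultiplying by $\lambda$. A uniform a priori bound $\|\overline{w}_\lambda^h\| \leq \|\overline{\T}_h(0)\| \leq C$ (the analogue of $\|w_\lambda\|\leq\|\T(0)\|$ from Proposition~\ref{basiclem}) handles this, since the integrated payoff $g^h$ is itself $O(h)$ and the discounted value is bounded by the payoff bound. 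Once that boundedness is secured, the rest is the routine contraction estimate above.
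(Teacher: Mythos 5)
Your overall strategy---a fixed-point perturbation estimate for contracting operators, comparing the discretized value with the exact-game value $w_\lambda^h$ and then invoking Corollary \ref{CW}---is exactly the mechanism of the paper's proof, which carries out the same contraction estimate inline on the two fixed-point equations. However, there is a genuine gap in your setup: the identification ``$\overline{w}^h_\lambda$ is the unique fixed point of $\overline{\D}^h_\lambda z=\lambda\,\overline{\T}_h[\frac{1-\lambda h}{\lambda}z]$'' is false for the object the proposition is about. By the paper's definition (Section 8.2), $\overline{w}^h_\lambda$ solves
\[
\overline{w}^h_\lambda(\omega)=\val_{X\times Y}\Bigl[\int_0^h \lambda e^{-\lambda t}\, g(\omega_t,x,y)\,dt + e^{-\lambda h}\,\P^h(x,y)[\omega]\circ \overline{w}^h_\lambda\Bigr],
\]
i.e.\ the continuation value is discounted by $e^{-\lambda h}$, not $1-\lambda h$, and the running payoff carries the weight $\lambda e^{-\lambda t}$ inside the integral rather than being $\lambda g^h$ with $g^h=\E\int_0^h g(\omega_t)\,dt$. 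Your operator $\overline{\D}^h_\lambda$ has a different fixed point, so, as written, your argument proves convergence of a different (linearly discounted) discretization, not of $\overline{w}^h_\lambda$; Lemma 8.1 alone cannot bridge this, since it only controls the discrepancies $hg$ vs.\ $g^h$ and $P_h$ vs.\ $\P^h$.

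The gap is fixable, and fixing it lands you essentially on the published proof: besides the Lemma 8.1 terms, you must estimate the discount mismatch $|e^{-\lambda h}-(1-\lambda h)|\cdot\|w^h_\lambda\|=O(\lambda^2h^2)$ and the payoff-weighting mismatch $\bigl\|\int_0^h\lambda e^{-\lambda t}g_t\,dt-\lambda\int_0^h g_t\,dt\bigr\|\le \tfrac12\lambda^2\|g\|\,h^2$. Both are $O(h^2)$ for fixed $\lambda$, so the same contraction argument (now with factor $e^{-\lambda h}$) gives $(1-e^{-\lambda h})\|\overline{w}^h_\lambda-w^h_\lambda\|=O(h^2)$, hence $\|\overline{w}^h_\lambda-w^h_\lambda\|=O(h)$ for fixed $\lambda$, and Corollary \ref{CW} concludes via the triangle inequality---this is precisely the paper's proof. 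A minor separate slip: your per-stage bound should read $\|\D^h_\lambda z-\overline{\D}^h_\lambda z\|\le C'(1+\|z\|)\,h^2$ rather than $C'(1+\|z\|)\,\lambda h^2$; multiplying Lemma 8.1 by $\lambda$ does not produce a factor $\lambda$ in front of the $\|z\|h^2$ term. This only makes the final constant $\lambda$-dependent, which is harmless since the statement is for fixed $\lambda$.
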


\begin{proof}
The equations for $\overline{w}^h_\lambda$ and $w^h_\lambda$, as well as the non expansiveness of the value operator, give:
\begin{eqnarray*}
\|\overline{w}^h_\lambda-w^h_\lambda\|&\leq& \lambda h\left\| g(\cdot)-\frac{1}{h}\int_0^h e^{-\lambda t}g_t(\cdot) \right\|+e^{-\lambda h}\|\overline{w}^h_\lambda-w^h_\lambda\| +  \|w^h_\lambda \| \|P_h-\P^h \| \\
&&+(1-\lambda h-e^{-\lambda h})\|w^h_\lambda \|\\
&\leq&\lambda O(h^2)+e^{-\lambda h}\|\overline{w}^h_\lambda-w^h_\lambda\| + O(h^2)+ \lambda^2 O(h^2)
\end{eqnarray*}
hence for a fixed $\lambda$, $(1-e^{-\lambda h})\|\overline{w}^h_\lambda-w^h_\lambda\|=O(h^2)$ and the result follows from Corollary \ref{CW}.
\end{proof}
Similar properties were obtained in \cite{N2013}.

\bigskip

For an alternative approach to the limit behavior of the discretization of the continuous model,  relying on viscosity solution tools and extending to various information structures on the state, see \cite{S15}.

\section{Extensions and concluding comments} 


\subsection{Stochastic games: no signals on the state}$ $ \\
Consider  a finite stochastic game  where the players know only the initial distribution $m \in \Delta(\Omega)$ and the actions at each stage.\\
The basic equation  for the exact game with duration $h$ is then
$$
\hat \T^h f (m) = \val [ h g(m; x,y) + \sum_{ij} x^i y^j f ( m\ast P_h(i, j) ) ].
$$
with $ [m\ast P_h(i, j)]( \omega) = \sum_z  m (z)  P_h(i, j)[z] ( \omega)$ being the image of the probability $m$ by the kernel $P_h(i, j)$.\\
The equation 
$$
 \hat \T^h = h  \hat \T  + (1-h) Id
 $$
 does not hold anymore and $ \T - Id $ has to be replaced by  $ \lim_{ h \rightarrow 0} \frac { \hat \T ^h   - Id}{h}$ in (\ref{M}). The study of such games with varying duration thus seems more involved.
 
\subsection{Link with games with uncertain duration}$ $ \\
Notice that $\T_h = (1-h) Id + h \T$ is a particular case of an operator of the form $ \sum_i \alpha_i \T^i, \alpha_i \geq 0, \sum \alpha_i = 1,$ which corresponds to some generalized iterate \cite{NNE,NS10} of $\T$. 
Hence all the values computed in sections 4.3, 5.3 and so on, can also be seen as the value of some games with uncertain duration. See \cite{V1} for specific remarks in the particular case of $V_n^h$.


\subsection{Oscillations} $ $ \\
Several examples of stochastic games (either with a finite set of states and compact sets of actions \cite{V4}, or compact set of states and finite set of actions \cite{Zi1}) were recently constructed for which the values $v_n$ and $v_\lambda$ do not converge. Hence the values of the corresponding games with vanishing duration (and  thus their limit as continuous time games) do not converge when $t$ goes to infinity or $\lambda$ to 0.

\subsection{Comparison to the literature} $ $ \\
The approach here is different from the one of Neyman \cite{N2013} : the proofs are based on properties of operators and not on strategies. For example  \cite{N2013} shows that playing optimally in (\ref{CD})   will imply  Corollary \ref{CW}.\\
By comparison our tools  consider only the values and apply to any non expansive map $\T$.

\subsection{Main results} $ $ \\
The main results can be summarized  in two parts:\\
- for a given finite length (or  discounted evaluation) the value of the game with vanishing stage duration converges  thus defining a limit value for the  associated continuous time game. Moreover the limit is described explicitly.\\
- as the length goes to $\infty$  or  the discount factor goes to 0, the impact of the stage duration goes to 0  and the asymptotic behavior of the normalized value function  is independent of the discretization.


\begin{thebibliography}{}

\end{thebibliography}


\begin{thebibliography}{999}
 \bibitem{Brezis} Br\' ezis H. (1973) {\it Op\' erateurs Maximaux Monotones et Semi-Groupes de Contractions dans les Espaces de Hilbert}, North-Holland.
 
 \bibitem{BP} Br\' ezis H. and A. Pazy  (1970) Accretive sets and differential equations in Banach spaces, \textit{Israel Journal of Mathematics}, {\bf 8}, 367-383.
 
\bibitem{GH1} Guo X. and O. Hernandez-Lerma (2003) Zero-sum games  for continuous-time Markov chains with unbounded transition and average payoff rates, {\it Journal of Applied Probability}, { \bf 40}, 327-345.

\bibitem{GH2} Guo X. and O. Hernandez-Lerma (2005) Zero-sum continuous-time Markov games with unbounded transition and discounted payoff rates, {\it Bernoulli}, { \bf 11}, 1009-1029.

\bibitem{K92} V.N. Kolokoltsov, V.N. (1992)
On linear, additive, and homogeneous operators in idempotent
analysis, {\it 
Advances in Soviet Mathematics} {\bf 13}, Idempotent Analysis, Ed.V.P.Maslov and S.N. Samborski, 87-101.

\bibitem{MSZ} Mertens J.-F., S. Sorin and S. Zamir (2015) \textit{Repeated
Games}, Cambridge University Press.

\bibitem{MO} Miyadera I. and S. Oharu  (1970) Approximation of semi-groups of nonlinear operators, {\it T\^ohoku Math. Journal}, {\bf 22}, 24-47.

\bibitem{NNE} Neyman A. (2003) Stochastic games and nonexpansive maps,
\textit{Stochastic Games and Applications}, Neyman A. and S. Sorin (eds.),
NATO Science Series, {\bf  C 570}, Kluwer Academic Publishers, 397-415.

\bibitem{N2012} Neyman A. (2012) Continuous-time stochastic games, DP 616, CSR Jerusalem.

\bibitem{N2013} Neyman A. (2013) Stochastic games with short-stage duration,   \textit{ Dynamic Games and Applications}, \textbf{3}, 236-278.




\bibitem{NSSto} Neyman A. and S. Sorin (eds.) (2003) \textit{Stochastic
Games and Applications}, NATO Science Series, {\bf  C 570}, Kluwer Academic
Publishers.

\bibitem{NS10} Neyman A. and S. Sorin (2010) Repeated games with public
uncertain duration process, \textit{International Journal of Game Theory},
\textbf{39}, 29-52.

\bibitem{No85} Nowak A.S. (1985) Universally measurable strategies in zero-sum stochastic games,
\textit{Annals of Probability}, {\bf 13}, 269-287.
 

\bibitem{No03} Nowak A.S. (2003) Zero-sum stochastic games with Borel state spaces,
\textit{Stochastic Games and Applications}, Neyman A. and S. Sorin (eds.),
NATO Science Series, {\bf  C 570}, Kluwer Academic Publishers, 77-91.

 
\bibitem{PRHL} Prieto-Rumeau T., Hernandez-Lerma O. (2012) \textit{Selected Topics on Continuous-Time Controlled Markov Chains and Markov Games}, Imperial College Press.

\bibitem{RS} Rosenberg D. and S. Sorin (2001) An operator approach to
zero-sum repeated games, \textit{Israel Journal of Mathematics}, \textbf{%
121}, 221- 246.


\bibitem{Sh} Shapley L. S. (1953) Stochastic games, \textit{Proceedings of
the National Academy of Sciences of the U.S.A}, \textbf{39}, 1095-1100.







\bibitem{S03} Sorin S. (2003) The operator approach to zero-sum stochastic games,
\textit{Stochastic Games and Applications}, Neyman A. and S. Sorin (eds.),
NATO Science Series, {\bf  C 570}, Kluwer Academic Publishers, 375-395.

\bibitem{S04} Sorin S. (2004) Asymptotic properties of monotonic nonexpansive mappings,
\textit{Discrete Event Dynamic Systems}, {\bf 14}, 109-122.

\bibitem{S15} Sorin S. (2015) Limit value of dynamic zero-sum games with vanishing stage duration, preprint.



\bibitem{TW} Tanaka K. and K. Wakuta  (1977) On continuous Markov games with the expected average reward criterion, \textit{Sci. Rep. Niigata Univ. Ser. A}, \textbf{14},
15-24.

\bibitem{V0} Vigeral G. (2009)Propri\'et\'es asymptotiques des jeux r\'ep\'et\'es \`a somme nulle, PhD Thesis, UPMC-Paris 6.


\bibitem{V1} Vigeral G. (2010) Evolution equations in discrete and
continuous time for non expansive operators in Banach spaces, \textit{ESAIM
COCV}, \textbf{16}, 809-832.



 \bibitem{V4}
Vigeral G. (2013) A zero-sum stochastic game with compact action sets and no asymptotic value, \textit{ Dynamic Games and Applications}, \textbf{3}, 172-186.


 \bibitem{Za64} Zachrisson L.E. (1964) Markov Games, {\it Advances in Game Theory}, Dresher M., L. S. Shapley and A.W. Tucker (eds), Annals of Mathematical Studies,  {\bf 52},  Princeton University Press, 210-253.



\bibitem{Zi1}  Ziliotto B. (2013)  Zero-sum repeated games: counterexamples to the existence of the asymptotic value and the conjecture $maxmin=lim v_n$, preprint  hal-00824039, \textit{to appear in Annals of Probability.}

\end{thebibliography}
\end{document}